\numberwithin{equation}{section} \allowdisplaybreaks
\begin{document}
\newtheorem{theorem}{Theorem}[section]
\newtheorem{defin}{Definition}[section]
\newtheorem{prop}{Proposition}[section]
\newtheorem{corol}{Corollary}[section]
\newtheorem{lemma}{Lemma}[section]
\newtheorem{rem}{Remark}[section]
\newtheorem{example}{Example}[section]
\title{Hessian geometry on Lagrange spaces}
\author{{\small by}\vspace{2mm}\\Izu Vaisman}
\date{}
\maketitle
{\def\thefootnote{*}\footnotetext[1]%
{{\it 2000 Mathematics Subject Classification: 53C15, 53C60.}
\newline\indent{\it Key words and phrases}: Hessian metric; Hessian curvature; Lagrange metric; CRF structure; Bott connection.}}
\begin{center} \begin{minipage}{10cm}
A{\footnotesize BSTRACT. We extend the correspondence between Hessian and K\"ahler metrics and curvatures to Lagrange spaces.}
\end{minipage}
\end{center} \vspace*{5mm}
\noindent
Hessian geometry on locally affine manifolds was studied by several authors, particularly, Cheng and Yau \cite{CY} and H. Shima \cite{S}. Shima introduced a notion of Hessian curvature, which is a finer invariant than Riemannian curvature\footnote{The Riemannian curvature of metrics defined by the Hessian of a function was studied extensively, e.g., \cite{To}.} and is related with the curvature of an associated K\"ahler metric on the tangent manifold (the total space of the tangent bundle). A Lagrange space is a manifold with a regular Lagrangian, also called a Lagrange metric, on its tangent manifold \cite{BM}. The latter has the vertical foliation by fibers and the fiber-wise Hessian of the Lagrangian defines (pseudo) Hessian\footnote{``Pseudo" is added if the metric is not positive definite.} metrics of the fibers. In this note, we extend the correspondence Hessian versus K\"ahler to the vertical foliation of the tangent manifold of a Lagrange space (Section 2). The subject of the note is not Lagrangian dynamics but, Hessian geometry and curvature in the context of
Lagrange spaces, which are a generalization of (pseudo) Finsler spaces. The study of curvature is motivated by the general principle that curvature invariants differentiate between spaces of a given type. We will begin by recalling the basics of Hessian and tangent bundle geometry (Section 1), since the reader is not supposed to be an expert on any of these, and by some required preparations. In an appendix we give index-free proofs of some properties of Hessian curvature established via local coordinates in \cite{S}. We work in the $C^\infty$ category and use the standard notation of differential geometry \cite{KN}.
\section{Preliminaries}
This is a preliminary section where we recall Hessian metrics and curvature and the basics of the geometry of tangent bundles. We refer to \cite{S} for Hessian geometry and to \cite{{BM},{VL}} for the tangent bundle geometry.
\subsection{Hessian geometry}
Let $N$ be a locally affine manifold with the flat, torsionless connection $\nabla^0$. A {\it (pseudo) Hessian metric (structure)} on $N$ is a (pseudo) Riemannian metric $g$ such that
\begin{equation}\label{ghess0} g|_{U_\alpha}(\mathcal{Y},\mathcal{Y}')=\nabla^0_{\mathcal{Y}'} \nabla^0_{\mathcal{Y}}\varphi_\alpha,
\end{equation}
where $\{U_\alpha\}$ is an open covering of $N$, $(\mathcal{Y},\mathcal{Y}')$ are local, parallel vector fields and $\varphi_\alpha\in C^\infty(U_\alpha)$.
If (\ref{ghess0}) holds on $N$ with a function $\varphi\in C^\infty(N)$, the metric is {\it globally (pseudo) Hessian}. Since local parallel vector fields are of the form\footnote{In the paper we use the Einstein summation convention.} $\mathcal{Y}=c^u(\partial/\partial y^u)$, where $(y^u)$ are local affine coordinates and $c^u=const.$, (\ref{ghess0}) is equivalent to
\begin{equation}\label{coefg} g|_{U_\alpha}=g_{uv} dy^u\otimes
dy^v,\;\;\;	 g_{uv}=\frac{\partial^2\varphi_\alpha}{\partial y^u\partial y^v}.\end{equation}

Let $\gamma$ be an arbitrary (pseudo) Riemannian metric on $N$. The formula
\begin{equation}\label{tCartan} C(\mathcal{Y},\mathcal{Y}',\mathcal{Y}'')= (\nabla^0_{\mathcal{Y}}\gamma)(\mathcal{Y}',\mathcal{Y}'')\end{equation}
defines a tensor, which we call the {\it Cartan tensor}. If the arguments are parallel vector fields, in particular vectors $\partial/\partial y^u$, the result is
\begin{equation}\label{tCartan1} C(\mathcal{Y},\mathcal{Y}',\mathcal{Y}'')= \mathcal{Y}(\gamma(\mathcal{Y}',\mathcal{Y}'')),\;
C_{uvw}=\frac{\partial g_{vw}}{\partial y^u}.\end{equation} The latest formula shows that $\gamma$ is a (pseudo) Hessian metric with components as in (\ref{coefg}) iff the tensor $C$ is totally symmetric.

The following question is natural: what are the conditions that characterize the class of (pseudo) Hessian manifolds $(N,g)$ within the class of (pseudo) Riemannian manifolds $(M,\gamma)$? The most straightforward answer\footnote{A significant answer to the question was given in \cite{Du}.} is that a (pseudo) Riemannian manifold is (pseudo) Hessian iff: 1) the Levi-Civita connection $\nabla$ of $\gamma$ can be deformed into a torsion-less flat connection $\nabla^0$ and 2) the Cartan tensor of the resulting pair $(\gamma,\nabla^0)$ is symmetric.

This remark motivates the introduction of the {\it difference (deformation) tensor} $\Phi=\nabla-\nabla^0$ \cite{S}, which has the following obvious properties
\begin{equation}\label{propPhi}
\Phi(\mathcal{Y},\mathcal{Y}')=\nabla_{\mathcal{Y}}\mathcal{Y}',
\;\Phi(\mathcal{Y},\mathcal{Y}')=\Phi(\mathcal{Y}',\mathcal{Y}),
\end{equation} where $\mathcal{Y}'$ is parallel in the first equality. The second equality is a consequence of the first since two parallel vector fields commute and $\nabla$ has no torsion.
The following lemma computes the difference tensor in the (pseudo) Hessian case.
\begin{lemma}\label{PhicuC} If the metric $\gamma$ is (pseudo) Hessian, then,
\begin{equation}\label{PhiC}
\gamma(\mathcal{Y}'',\Phi(\mathcal{Y},\mathcal{Y}')) =\frac{1}{2}C(\mathcal{Y}'',\mathcal{Y},\mathcal{Y}').
\end{equation}
\end{lemma}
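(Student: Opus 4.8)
The plan is to verify the identity pointwise on $\nabla^0$-parallel vector fields, since both sides of (\ref{PhiC}) are tensorial in $\mathcal{Y},\mathcal{Y}',\mathcal{Y}''$ and at each point parallel frames span the tangent space. So throughout I take $\mathcal{Y},\mathcal{Y}',\mathcal{Y}''$ to be local parallel vector fields, for which $\nabla^0_{\mathcal{Y}}\mathcal{Y}'=\nabla^0_{\mathcal{Y}}\mathcal{Y}''=0$, whence $\Phi(\mathcal{Y},\mathcal{Y}')=\nabla_{\mathcal{Y}}\mathcal{Y}'$ and $\Phi(\mathcal{Y},\mathcal{Y}'')=\nabla_{\mathcal{Y}}\mathcal{Y}''$ by (\ref{propPhi}).

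First I would exploit the metric-compatibility of the Levi-Civita connection. Expanding $(\nabla_{\mathcal{Y}}\gamma)(\mathcal{Y}',\mathcal{Y}'')=0$ and inserting the relations above yields
\[
\mathcal{Y}(\gamma(\mathcal{Y}',\mathcal{Y}''))=\gamma(\Phi(\mathcal{Y},\mathcal{Y}'),\mathcal{Y}'')+\gamma(\mathcal{Y}',\Phi(\mathcal{Y},\mathcal{Y}'')).
\]
By (\ref{tCartan1}) the left-hand side is exactly $C(\mathcal{Y},\mathcal{Y}',\mathcal{Y}'')$ on parallel arguments. Introducing the auxiliary tensor $D(\mathcal{Y},\mathcal{Y}',\mathcal{Y}'')=\gamma(\Phi(\mathcal{Y},\mathcal{Y}'),\mathcal{Y}'')$, this reads
\[
C(\mathcal{Y},\mathcal{Y}',\mathcal{Y}'')=D(\mathcal{Y},\mathcal{Y}',\mathcal{Y}'')+D(\mathcal{Y},\mathcal{Y}'',\mathcal{Y}').
\]

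The crux is then a symmetrization. The symmetry of $\Phi$ in (\ref{propPhi}) makes $D$ symmetric in its first two slots, while the (pseudo) Hessian hypothesis makes $C$ totally symmetric. Writing the displayed relation for the three cyclic permutations of $(\mathcal{Y},\mathcal{Y}',\mathcal{Y}'')$ and using that all three left-hand sides equal the common value of $C$, one finds that the three \emph{a priori} distinct values of $D$ permitted by its first-two-slot symmetry must all coincide; hence $D$ is in fact totally symmetric. The relation then collapses to $C=2D$, and therefore
\[
\gamma(\mathcal{Y}'',\Phi(\mathcal{Y},\mathcal{Y}'))=D(\mathcal{Y},\mathcal{Y}',\mathcal{Y}'')=\tfrac{1}{2}C(\mathcal{Y},\mathcal{Y}',\mathcal{Y}'')=\tfrac{1}{2}C(\mathcal{Y}'',\mathcal{Y},\mathcal{Y}'),
\]
the last equality again by total symmetry of $C$, which is (\ref{PhiC}). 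I expect the only real (if mild) obstacle to be this combinatorial step: one must remember that the single symmetry of $D$ available at the outset is in the first two arguments, so that total symmetry of $D$ is a \emph{conclusion} of the permutation argument rather than an input. Everything else reduces to the two structural facts $\nabla\gamma=0$ and $\Phi(\mathcal{Y},\mathcal{Y}')=\nabla_{\mathcal{Y}}\mathcal{Y}'$ on parallel fields.
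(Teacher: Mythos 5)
Your proof is correct and is essentially the paper's argument: the paper invokes the Koszul formula for the Levi-Civita connection on commuting parallel fields and then applies the total symmetry of $C$, whereas you re-derive that formula by cycling the metric-compatibility identity $\nabla\gamma=0$ and using the symmetry of $\Phi$ (which encodes torsion-freeness). The inputs and the final symmetrization step are the same, so this is the same route with the Koszul formula unpacked rather than cited.
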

\begin{proof} Since $C$ is a tensor, it suffices to evaluate $C$ on parallel vector fields, which we shall assume for all the arguments below, hence, (\ref{tCartan1}) holds.
On the other hand, from the well known global expression of the Levi-Civita connection (\cite{KN}, vol. I, \&IV.2), and since the bracket of two parallel vector fields vanishes, we have
$$\begin{array}{l}
\gamma(\mathcal{Y}'',\Phi(\mathcal{Y},\mathcal{Y}'))=
\gamma(\mathcal{Y}'',D_{\mathcal{Y}}\mathcal{Y}')= \frac{1}{2}\{\mathcal{Y}(\gamma(\mathcal{Y}',\mathcal{Y}'')) +\mathcal{Y}'(\gamma(\mathcal{Y},\mathcal{Y}''))\vspace*{2mm}\\ -
\mathcal{Y}''(\gamma(\mathcal{Y},\mathcal{Y}'))\}
=\frac{1}{2}\{C^{S'}(\mathcal{Y},\mathcal{Y}',\mathcal{Y}'') +C^{S'}(\mathcal{Y}',\mathcal{Y},\mathcal{Y}'') -C^{S'}(\mathcal{Y}'',\mathcal{Y},\mathcal{Y}')\}. \end{array}$$
If $\gamma$ is (pseudo) Hessian, $C$ is symmetric and we get the required result.\end{proof}

Condition 1) requires the relation between the curvatures of $\nabla,\nabla^0$. For arbitrary connections, a technical calculation that starts with the definitions gives the known formulas
\begin{equation}\label{curbDB}\begin{array}{l} T_{\nabla}(\mathcal{Y},\mathcal{Y}')=T_{\nabla^0}(\mathcal{Y},\mathcal{Y}') +\Phi(\mathcal{Y},\mathcal{Y}')-\Phi(\mathcal{Y}',\mathcal{Y}),\vspace*{2mm}\\
R_{\nabla}(\mathcal{Y},\mathcal{Y}')\mathcal{Y}'' -\Phi(T_{\nabla}(\mathcal{Y},\mathcal{Y}'),\mathcal{Y}'')\vspace*{2mm}\\ = R_{\nabla^0}(\mathcal{Y},\mathcal{Y}')\mathcal{Y}''
+\mathfrak{Q}(\mathcal{Y}',\mathcal{Y}'')\mathcal{Y}
-\mathfrak{Q}(\mathcal{Y},\mathcal{Y}'')\mathcal{Y}',
\end{array}
\end{equation}
where the arguments are arbitrary tangent vectors of $N$, $R$ denotes curvature, $T$ denotes torsion and
\begin{equation}\label{Xi} \mathfrak{Q}(\mathcal{Y}',\mathcal{Y}'')\mathcal{Y}=
\nabla^0_{\mathcal{Y}}(\Phi(\mathcal{Y}',\mathcal{Y}''))  -\Phi(\nabla_{\mathcal{Y}}\mathcal{Y}',\mathcal{Y}'') -\Phi(\mathcal{Y}',\nabla_{\mathcal{Y}}\mathcal{Y}'')
\end{equation}
is a {\it mixed covariant derivative}. As a consequence, we get the following necessary condition for a (pseudo) Hessian metric.
\begin{prop}\label{condnec} If $\gamma$ is a (pseudo) Hessian metric, there exists a symmetric deformation tensor $\Phi$ such that the Riemannian curvature of $\gamma$ satisfies the relation
\begin{equation}\label{RsiQgotic} R_{\nabla}(\mathcal{Y}',\mathcal{Y}, \mathcal{Y}_1,\mathcal{Y}_2)= \mathfrak{Q}(\mathcal{Y}',\mathcal{Y}_1, \mathcal{Y}_2,\mathcal{Y})- \mathfrak{Q}(\mathcal{Y}',\mathcal{Y}_2, \mathcal{Y}_1,\mathcal{Y}),\end{equation}
where
$$\begin{array}{l}
R_\nabla(\mathcal{Y}_1,\mathcal{Y}_2,\mathcal{Y}_3,\mathcal{Y}_4) =\gamma(\mathcal{Y}_1, R_\nabla(\mathcal{Y}_3,\mathcal{Y}_4)\mathcal{Y}_2),\vspace*{2mm}\\	 \mathfrak{Q}(\mathcal{Y}_1,\mathcal{Y}_2,\mathcal{Y}_3,\mathcal{Y}_4) =\gamma(\mathcal{Y}_1, \mathfrak{Q}(\mathcal{Y}_3,\mathcal{Y}_4)\mathcal{Y}_2).\end{array}$$
\end{prop}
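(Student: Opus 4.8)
The plan is to read the relation directly off the general curvature--deformation formula (\ref{curbDB}), exploiting the two features special to the Hessian situation. Since $\gamma$ is (pseudo) Hessian it carries, by definition, a flat torsionless connection $\nabla^0$; taking $\Phi=\nabla-\nabla^0$ with $\nabla$ the Levi--Civita connection of $\gamma$ gives a deformation tensor which is symmetric by the second identity in (\ref{propPhi}), so the ``symmetric $\Phi$'' demanded by the statement is at hand. I would then feed into (\ref{curbDB}) the two vanishing facts: $\nabla$ is torsionless, whence $T_\nabla=0$, and $\nabla^0$ is flat, whence $R_{\nabla^0}=0$. The first annihilates the term $\Phi(T_\nabla(\mathcal{Y},\mathcal{Y}'),\mathcal{Y}'')$ on the left-hand side and the second annihilates $R_{\nabla^0}(\mathcal{Y},\mathcal{Y}')\mathcal{Y}''$ on the right, collapsing (\ref{curbDB}) to the operator identity
\[
R_\nabla(\mathcal{Y},\mathcal{Y}')\mathcal{Y}''=\mathfrak{Q}(\mathcal{Y}',\mathcal{Y}'')\mathcal{Y}-\mathfrak{Q}(\mathcal{Y},\mathcal{Y}'')\mathcal{Y}',
\]
with $\mathfrak{Q}$ the very mixed covariant derivative introduced in (\ref{Xi}), so nothing new needs to be recomputed.

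The remaining step is purely formal: pair both sides with $\gamma(\mathcal{Y}_1,\,\cdot\,)$ to lower the free index, and invoke the definitional conventions $R_\nabla(A,B,C,D)=\gamma(A,R_\nabla(C,D)B)$ and $\mathfrak{Q}(A,B,C,D)=\gamma(A,\mathfrak{Q}(C,D)B)$ recorded just after the statement. After pairing, the left-hand side is $R_\nabla(\mathcal{Y}_1,\mathcal{Y}'',\mathcal{Y},\mathcal{Y}')$, while each $\mathfrak{Q}$-term on the right is rewritten by reading off its operator arguments and the vector it acts on; renaming $(\mathcal{Y}_1,\mathcal{Y}'',\mathcal{Y},\mathcal{Y}')\mapsto(\mathcal{Y}',\mathcal{Y},\mathcal{Y}_1,\mathcal{Y}_2)$ turns the identity into exactly (\ref{RsiQgotic}).

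The one point requiring genuine care—indeed the only place an error would creep in—is the bookkeeping of argument positions in that last passage, because both conventions put the \emph{operator} arguments in the final two slots and the \emph{acted-upon} vector in the second slot, rather than in the order in which they appear inside $\mathfrak{Q}(\cdot,\cdot)\cdot$. Concretely, $\gamma(\mathcal{Y}',\mathfrak{Q}(\mathcal{Y},\mathcal{Y}_2)\mathcal{Y}_1)$ must be recognized as $\mathfrak{Q}(\mathcal{Y}',\mathcal{Y}_1,\mathcal{Y},\mathcal{Y}_2)$ and not as a naive listing of the arguments. Tracking this permutation correctly in both terms is what yields the precise slot pattern $\mathfrak{Q}(\mathcal{Y}',\mathcal{Y}_1,\mathcal{Y}_2,\mathcal{Y})-\mathfrak{Q}(\mathcal{Y}',\mathcal{Y}_2,\mathcal{Y}_1,\mathcal{Y})$ on the right of (\ref{RsiQgotic}); everything else is substitution of $T_\nabla=0$ and $R_{\nabla^0}=0$.
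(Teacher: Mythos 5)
Your proposal is correct and follows the paper's own (very terse) proof: substitute $T_\nabla=0$ and $R_{\nabla^0}=0$ into (\ref{curbDB}), take $\Phi=\nabla-\nabla^0$ (symmetric by (\ref{propPhi})), lower the index with $\gamma$, and relabel arguments according to the stated conventions. Your index-tracking in the final relabelling is accurate, so no gap remains.
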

\begin{proof} Use $T_\nabla=0$ and use parallel arguments $\mathcal{Y}$ in (\ref{curbDB}).
\end{proof}

We shall return to the terminology of \cite{S} as follows.
\begin{defin}\label{defHesscurb} {\rm
The {\it Hessian curvature operator and tensor} are, respectively, given by\footnote{In the covariant curvature tensor $Q$ we have preferred an order of arguments which is different from the order used in \cite{S}.}
\begin{equation}\label{eqHessc} \begin{array}{l}
Q(\mathcal{Y}_1,\mathcal{Y}_2)\mathcal{Y} =
\nabla^0_{\mathcal{Y}}(\Phi(\mathcal{Y}_1,\mathcal{Y}_2))  -\Phi(\nabla^0_{\mathcal{Y}}\mathcal{Y}_1,\mathcal{Y}_2) -\Phi(\mathcal{Y}_1,\nabla^0_{\mathcal{Y}}\mathcal{Y}_2),\vspace*{2mm}\\
Q(\mathcal{Y},\mathcal{Y}',\mathcal{Y}_1,\mathcal{Y}_2)=
\gamma(Q(\mathcal{Y}_1,\mathcal{Y}_2)\mathcal{Y}',\mathcal{Y}).
\end{array}\end{equation}}\end{defin}
\begin{prop}\label{lemaQQ}
The Hessian curvature tensor of a (pseudo) Hessian metric $\gamma$ is related to the mixed covariant derivative $\mathfrak{Q}$ by the relations
\begin{equation}\label{exprQgotic} \begin{array}{l}
Q(\mathcal{Y},\mathcal{Y}',\mathcal{Y}_1,\mathcal{Y}_2)=
\mathfrak{Q}(\mathcal{Y},\mathcal{Y}',\mathcal{Y}_1,\mathcal{Y}_2)
\vspace*{2mm}\\ +\frac{1}{2}[C(\mathcal{Y},\nabla_{\mathcal{Y}'}\mathcal{Y}_1, \mathcal{Y}_2)- C(\mathcal{Y}, \mathcal{Y}_1,\nabla_{\mathcal{Y}'}\mathcal{Y}_2)]\vspace*{2mm}\\
=\mathfrak{Q}(\mathcal{Y},\mathcal{Y}',\mathcal{Y}_1,\mathcal{Y}_2)
 +[\gamma(\nabla_{\mathcal{Y}'}\mathcal{Y}_1, \nabla_{\mathcal{Y}}\mathcal{Y}_2) +\gamma(\nabla_{\mathcal{Y}'}\mathcal{Y}_2, \nabla_{\mathcal{Y}}\mathcal{Y}_1)], \end{array}\end{equation}
where the arguments $\mathcal{Y}$ are parallel vector fields.\end{prop}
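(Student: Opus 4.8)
The plan is to compute the difference between the operators $Q$ and $\mathfrak{Q}$ straight from their definitions \eqref{eqHessc} and \eqref{Xi}, and then to recognize the surviving terms by means of Lemma \ref{PhicuC}. Both $Q$ and $\mathfrak{Q}$ are $C^\infty$-linear (tensorial) in each argument, so the identities may be checked at an arbitrary point; there I would extend the vectors to local $\nabla^0$-parallel fields and take $\mathcal{Y},\mathcal{Y}',\mathcal{Y}_1,\mathcal{Y}_2$ parallel throughout. This is precisely the content of the concluding hypothesis, and it is also what renders the right-hand sides of \eqref{exprQgotic} meaningful, since the expressions $\nabla_{\mathcal{Y}'}\mathcal{Y}_1$ appearing there are not tensorial in $\mathcal{Y}_1$. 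For parallel arguments one has $\nabla^0_{\mathcal{Y}'}\mathcal{Y}_i=0$ and, by \eqref{propPhi}, $\nabla_{\mathcal{Y}'}\mathcal{Y}_i=\Phi(\mathcal{Y}',\mathcal{Y}_i)$.

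With this choice the leading terms $\nabla^0_{\mathcal{Y}'}(\Phi(\mathcal{Y}_1,\mathcal{Y}_2))$ of \eqref{eqHessc} and \eqref{Xi} coincide and cancel, while the remaining terms of $Q$ vanish; the only surviving contribution comes from the last two terms of $\mathfrak{Q}$, in which $\nabla$ replaces $\nabla^0$. Since $\nabla-\nabla^0=\Phi$, the operator difference is
\[
(Q-\mathfrak{Q})(\mathcal{Y}_1,\mathcal{Y}_2)\mathcal{Y}'=\Phi(\Phi(\mathcal{Y}',\mathcal{Y}_1),\mathcal{Y}_2)+\Phi(\mathcal{Y}_1,\Phi(\mathcal{Y}',\mathcal{Y}_2)).
\]
Pairing with $\mathcal{Y}$ and applying Lemma \ref{PhicuC} to each term (with $\mathcal{Y}''=\mathcal{Y}$) turns the two $\Phi\circ\Phi$ contributions into expressions of the form $\tfrac12 C(\mathcal{Y},\Phi(\mathcal{Y}',\mathcal{Y}_i),\,\cdot\,)$; rewriting $\Phi(\mathcal{Y}',\mathcal{Y}_i)=\nabla_{\mathcal{Y}'}\mathcal{Y}_i$ then produces the first line of \eqref{exprQgotic}.

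To reach the second line I would apply Lemma \ref{PhicuC} once more, now in the reverse direction. In each $C$-term one slot is occupied by $\mathcal{Y}$ and another by a $\Phi$; using the total symmetry of $C$ (valid because $\gamma$ is pseudo-Hessian, cf.\ the remark after \eqref{tCartan1}) I would move the $\Phi$-argument into the first slot and read the result back through Lemma \ref{PhicuC} as $\gamma(\Phi(\mathcal{Y}',\mathcal{Y}_i),\Phi(\mathcal{Y},\mathcal{Y}_j))=\gamma(\nabla_{\mathcal{Y}'}\mathcal{Y}_i,\nabla_{\mathcal{Y}}\mathcal{Y}_j)$, which assembles into the claimed pair of $\gamma$-terms.

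The step I expect to be most delicate is the symmetry bookkeeping of $C$: Lemma \ref{PhicuC} always outputs the distinguished argument $\mathcal{Y}''$ in the first slot, whereas re-applying it in reverse requires a $\Phi$-argument there, so the total symmetry of $C$ must be invoked at exactly the right places, and the relative signs together with the symmetric roles of the pair $(\mathcal{Y}_1,\mathcal{Y}_2)$ must be tracked with care in order to match both displayed forms.
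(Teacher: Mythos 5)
Your proof is correct and follows essentially the same route as the paper's: compute $Q-\mathfrak{Q}$ directly from (\ref{eqHessc}) and (\ref{Xi}) on parallel arguments, convert the two surviving $\Phi\circ\Phi$ terms via Lemma \ref{PhicuC}, and pass to the $\gamma$-form using the total symmetry of $C$. One remark: your computation produces a \emph{plus} sign between the two $C$-terms, whereas the first displayed line of (\ref{exprQgotic}) shows a minus; the plus is the correct sign --- it is the only one consistent with the third line of (\ref{exprQgotic}) and with the way the formula is used in the proof of Proposition \ref{proprelRQ} --- so the printed minus is a typo in the paper, not an error in your argument.
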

\begin{proof} The difference $Q-\mathfrak{Q}$ is provided by (\ref{Xi}), (\ref{eqHessc}). Then, using the first equality (\ref{propPhi}), (\ref{PhiC}), the symmetry of $C$ and the preservation of $\gamma$ by $\nabla$ we get the required formula.\end{proof}

Formulas (\ref{exprQgotic}) lead to the following reformulation of the necessary condition given in Proposition \ref{condnec} for (pseudo) Hessian metrics.
\begin{prop}\label{proprelRQ} {\rm\cite{S}} If $\gamma$ is a (pseudo) Hessian metric, the following relation between the Riemannian and the Hessian curvature of $\gamma$ holds:
\begin{equation}\label{relRQ} R_{\nabla}(\mathcal{Y}',\mathcal{Y},\mathcal{Y}_1,\mathcal{Y}_2)
=\frac{1}{2}[Q(\mathcal{Y}',\mathcal{Y}_1,\mathcal{Y}_2,\mathcal{Y})
-Q(\mathcal{Y}',\mathcal{Y}_2,\mathcal{Y}_1,\mathcal{Y})].\end{equation}
\end{prop}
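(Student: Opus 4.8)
The plan is to substitute the relation (\ref{exprQgotic}) between $Q$ and the mixed covariant derivative $\mathfrak{Q}$ into the necessary condition (\ref{RsiQgotic}), and to extract the factor $1/2$ from the behaviour of the correction term under the interchange $\mathcal{Y}_1\leftrightarrow\mathcal{Y}_2$. For any four-argument object $X$ write $\mathcal{A}X:=X(\mathcal{Y}',\mathcal{Y}_1,\mathcal{Y}_2,\mathcal{Y})-X(\mathcal{Y}',\mathcal{Y}_2,\mathcal{Y}_1,\mathcal{Y})$ for this antisymmetrization. Then Proposition \ref{condnec} reads simply $R_\nabla(\mathcal{Y}',\mathcal{Y},\mathcal{Y}_1,\mathcal{Y}_2)=\mathcal{A}\mathfrak{Q}$, so it suffices to establish $\mathcal{A}\mathfrak{Q}=\tfrac12\,\mathcal{A}Q$, equivalently $\mathcal{A}(Q-\mathfrak{Q})=\tfrac12\,\mathcal{A}Q$.

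First I would compute the full tensor $Q-\mathfrak{Q}$, not merely its antisymmetrization. Subtracting the defining formulas (\ref{Xi}) and (\ref{eqHessc}) and using $\nabla-\nabla^0=\Phi$ turns every $\nabla^0$ versus $\nabla$ discrepancy into a value of $\Phi$, so $Q-\mathfrak{Q}$ becomes a purely algebraic expression built from a sum of two terms of the shape $\Phi(\Phi(\cdot,\cdot),\cdot)$; this is exactly the content of Proposition \ref{lemaQQ}. Applying $\mathcal{A}$, the term in which $\Phi(\mathcal{Y}_1,\mathcal{Y}_2)$ occurs is symmetric in $\mathcal{Y}_1,\mathcal{Y}_2$ by the symmetry of $\Phi$ from (\ref{propPhi}) and drops out; moving the outer $\Phi$ across $\gamma$ by the self-adjointness $\gamma(\mathcal{Y}',\Phi(Z,W))=\gamma(W,\Phi(Z,\mathcal{Y}'))$ — which is (\ref{PhiC}) together with the total symmetry of $C$ — leaves $\mathcal{A}(Q-\mathfrak{Q})=\gamma(\Phi(\mathcal{Y}_1,\mathcal{Y}),\Phi(\mathcal{Y}_2,\mathcal{Y}'))-\gamma(\Phi(\mathcal{Y}_2,\mathcal{Y}),\Phi(\mathcal{Y}_1,\mathcal{Y}'))$.

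Independently I would expand $\mathcal{A}Q$. The operator of (\ref{eqHessc}) is the covariant derivative $Q(\mathcal{Y}_1,\mathcal{Y}_2)\mathcal{Y}'=(\nabla^0_{\mathcal{Y}'}\Phi)(\mathcal{Y}_1,\mathcal{Y}_2)$, so differentiating the identity $C=2\gamma(\cdot,\Phi(\cdot,\cdot))$ of (\ref{PhiC}) and using $\nabla^0\gamma=C$ from (\ref{tCartan}) writes $Q(\mathcal{Y}',\mathcal{Y}_1,\mathcal{Y}_2,\mathcal{Y})$ as one half of a component of $\nabla^0C$ minus twice a quadratic $\Phi\!\cdot\!\Phi$ term. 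The decisive point is that the genuine derivative part $\nabla^0C$ is totally symmetric for a Hessian metric: $C=\nabla^0\gamma$ is totally symmetric by hypothesis, so $\nabla^0C=(\nabla^0)^2\gamma$ is symmetric in its last three slots, while flatness and torsion-freeness of $\nabla^0$ make the second covariant derivative symmetric in the two differentiation slots; together these give full symmetry in all arguments. Hence the $\nabla^0C$ part of $Q$ cancels under $\mathcal{A}$, leaving $\mathcal{A}Q=-2[\gamma(\Phi(\mathcal{Y}_2,\mathcal{Y}),\Phi(\mathcal{Y}_1,\mathcal{Y}'))-\gamma(\Phi(\mathcal{Y}_1,\mathcal{Y}),\Phi(\mathcal{Y}_2,\mathcal{Y}'))]$, which is precisely $2\,\mathcal{A}(Q-\mathfrak{Q})$. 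Combining with $R_\nabla=\mathcal{A}\mathfrak{Q}=\mathcal{A}Q-\mathcal{A}(Q-\mathfrak{Q})$ then gives $R_\nabla=\tfrac12\,\mathcal{A}Q$, which is (\ref{relRQ}).

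I expect the cancellation of the derivative part to be the only genuine obstacle: the manipulations with the symmetry of $\Phi$ and the self-adjointness from (\ref{PhiC}) are routine bookkeeping, but the vanishing of the antisymmetrized $\nabla^0C$ is exactly where the Hessian hypothesis is used and is what forces the coefficient $1/2$ — were the derivative term to survive, the identity would fail. As a safeguard I would check the outcome once in affine coordinates, where $C_{uvw}=\partial_u\partial_v\partial_w\varphi$ and $(\nabla^0C)_{suvw}=\partial_s\partial_u\partial_v\partial_w\varphi$ is manifestly totally symmetric, so the antisymmetrized derivative terms cancel termwise and only the quadratic $\Phi\!\cdot\!\Phi$ contribution remains.
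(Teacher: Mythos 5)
Your argument is correct, but it extracts the factor $1/2$ by a different mechanism than the paper. Both proofs start from $R_\nabla=\mathcal{A}\mathfrak{Q}$ (Proposition \ref{condnec}) and from the decomposition (\ref{exprQgotic}) of $\mathfrak{Q}$ as $Q$ minus a quadratic $\Phi\cdot\Phi$ correction; the divergence is in what one does with that correction. The paper's proof never computes $\mathcal{A}Q$ itself: it observes that, for parallel arguments and using $\nabla\gamma=0$, $T_\nabla=0$, the antisymmetrized correction $-\gamma(\nabla_{\mathcal{Y}_1}\mathcal{Y},\nabla_{\mathcal{Y}'}\mathcal{Y}_2)+\gamma(\nabla_{\mathcal{Y}_2}\mathcal{Y},\nabla_{\mathcal{Y}'}\mathcal{Y}_1)$ is exactly $-\gamma(R_\nabla(\mathcal{Y}_1,\mathcal{Y}_2)\mathcal{Y},\mathcal{Y}')=-R_\nabla(\mathcal{Y}',\mathcal{Y},\mathcal{Y}_1,\mathcal{Y}_2)$, so the identity collapses to $R_\nabla=\mathcal{A}Q-R_\nabla$ and the $1/2$ appears by solving for $R_\nabla$. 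You instead evaluate $\mathcal{A}Q$ independently via the explicit parallel-argument formula $Q=\tfrac12\nabla^0C-2\,\gamma(\Phi(\cdot,\cdot),\Phi(\cdot,\cdot))$, kill the derivative part by the total symmetry of $(\nabla^0)^2\gamma$, and conclude $\mathcal{A}Q=2\,\mathcal{A}(Q-\mathfrak{Q})$; this is essentially a re-proof of the Appendix's Proposition \ref{3.1.S} followed by antisymmetrization. The two routes are secretly the same computation (the paper's identification of the correction with $-R_\nabla$ also rests on the cancellation of the symmetric fourth-derivative terms), but yours makes explicit where the Hessian hypothesis enters and yields the finer formula (\ref{QcuC}) as a by-product, at the cost of being longer; the paper's is shorter because it reuses the definition of curvature instead of expanding $Q$. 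Incidentally, your coefficient ``one half of a component of $\nabla^0C$'' is the correct one (it matches Shima's $\tfrac12\,\partial^4\varphi$ and your own affine-coordinate check), whereas the displayed formula (\ref{QcuC}) in the Appendix appears to have dropped that $\tfrac12$ on the derivative term; this discrepancy is harmless for the present proposition since the derivative term cancels under antisymmetrization in either normalization.
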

\begin{proof} We may assume that the arguments are parallel vector fields. Then, (\ref{RsiQgotic}), (\ref{exprQgotic}) imply
$$\begin{array}{l}
R_{\nabla}(\mathcal{Y}',\mathcal{Y},\mathcal{Y}_1,\mathcal{Y}_2)=
Q(\mathcal{Y}',\mathcal{Y}_1,\mathcal{Y}_2,\mathcal{Y})
-Q(\mathcal{Y}',\mathcal{Y}_2,\mathcal{Y}_1,\mathcal{Y}) \vspace*{2mm}\\ \hspace*{2mm} -\gamma(\nabla_{\mathcal{Y}_1}\mathcal{Y},\nabla_{\mathcal{Y}'}\mathcal{Y}_2)+
\gamma(\nabla_{\mathcal{Y}_2}\mathcal{Y},\nabla_{\mathcal{Y}'}\mathcal{Y}_1) \vspace*{2mm}\\ \hspace*{2mm}=
Q(\mathcal{Y}',\mathcal{Y}_1,\mathcal{Y}_2,\mathcal{Y})
-Q(\mathcal{Y}',\mathcal{Y}_2,\mathcal{Y}_1,\mathcal{Y}) -
\gamma(R_\nabla(\mathcal{Y}_1,\mathcal{Y}_2)\mathcal{Y},\mathcal{Y}').
\end{array}$$
(We have used the commutation of parallel vector fields and the properties $\nabla\gamma=0$, $T_\nabla=0$.) This is equivalent to (\ref{relRQ}).
\end{proof}
\begin{rem}\label{2raport} {\rm We refer the reader to \cite{S} for the expression of the local components of the Hessian curvature. See also the Appendix of the present paper for the evaluation of the Hessian curvature on parallel arguments.}\end{rem}
\subsection{Tangent bundle geometry}
Let $M$ be an $m$-dimensional manifold and $TM$ its tangent bundle with the total space $\mathcal{T}M$, called the {\it tangent manifold} of $M$.
The differentiable structure of $\mathcal{T}M$ is given by local coordinates
$(x^i,y^i)$ where $i=1,...,m$, $x^i$ are local coordinates on $M$ and $y^i$ are vector coordinates with respect to the basis $\partial/\partial x^i$. The corresponding coordinate transformations are:
\begin{equation}\label{coordtr} \tilde{x}^i=\tilde{x}^i(x^j),
\,\tilde{y}^i=\frac{\partial\tilde{x}^i}{\partial x^j}y^j.
\end{equation}
The fibers of $TM$ define the {\it vertical foliation} $\mathcal{V}$. We will use the same symbol $\mathcal{V}$ for the tangent bundle of the vertical leaves.

A tangent vector $X=\xi^i(\partial/\partial x^i)\in T_xM$ has a {\it vertical lift}
defined by
$$(\xi^i\frac{\partial}{\partial x^i})^v=\xi^i\frac{\partial}{\partial y^i}.$$
The formula $S\mathcal{X}=(\pi_*\mathcal{X})^v$ ($\mathcal{X}\in T(\mathcal{T}M),\pi:TM\rightarrow M$) defines a Nijenhuis tensor field\footnote{A Nijenhuis tensor field is an endomorphism of $TM$ that has a vanishing Nijenhuis tensor
$$\mathcal{N}_S(\mathcal{X},\mathcal{Y})= [S\mathcal{X},S\mathcal{Y}]-S[S\mathcal{X},\mathcal{Y}] -S[\mathcal{X},S\mathcal{Y}]+S^2[\mathcal{X},\mathcal{Y}].$$}
$S\in End(T\mathcal{T}M)$ with the properties $S^2=0,\,im\,S=ker\,S=\mathcal{V}$, called the {\it tangent structure} of $\mathcal{T}M$.

A {\it tangent metric} on $\mathcal{T}M$ is a (pseudo) Riemannian metric $\gamma$ with a non degenerate restriction to $\mathcal{V}$ and such that
$$ \gamma(S\mathcal{X},S\mathcal{Y})=\gamma(\mathcal{X},\mathcal{Y}),
\;\;\forall \mathcal{X},\mathcal{Y}\perp_\gamma\mathcal{V}.
$$
Then, $\mathcal{H}=\mathcal{V}^{\perp_\gamma}$ is a complement of $\mathcal{V}$ in $\mathcal{T}M$, $g=\gamma|_{\mathcal{H}}$ is a non degenerate metric on $\mathcal{H}$ and $S$ defines an isometry $(\mathcal{H},g)\approx(\mathcal{V},\gamma|_{\mathcal{V}})$ with the inverse $S'$, which we extend by $0$ on $\mathcal{H}$. The mapping $\gamma\mapsto(\mathcal{H},g)$ is a bijection between tangent metrics and pairs consisting of a complementary bundle and a transversal metric of the foliation $\mathcal{V}$. If we start with the pair $(\mathcal{H},g)$, $\gamma$ is defined by $\mathcal{H}\perp_\gamma\mathcal{V}$ and
$$ \gamma(\mathcal{X},\mathcal{Y})= \gamma(S\mathcal{X},S\mathcal{Y})=g(\mathcal{X},\mathcal{Y}),\;
\forall\mathcal{X},\mathcal{Y}\in\mathcal{H}.
$$

Any complement $\mathcal{H}$ of $\mathcal{V}$ ($\mathcal{H}\oplus\mathcal{V}=T\mathcal{T}M$) is called a {\it horizontal bundle} and also a {\it nonlinear connection}. A vector $X\in TM$ has a {\it horizontal lift} $X^h$ characterized by $X^h\in\mathcal{H},\pi_*X^h=X$. The horizontal lifts of $\partial/\partial x^i$ yield local tangent bases of $T\mathcal{T}M$, \begin{equation}\label{Hbases}X_i=\left(\frac{\partial}{\partial x^i}\right)^h= \frac{\partial}{\partial x^i}-t_i^j\frac{\partial}{\partial y^j},\;\frac{\partial}{\partial y^i},\end{equation} with the dual  cotangent bases $(dx^i,\,\theta^i=dy^i+t^i_jdx^j)$, where $t_i^j$ are local functions on $\mathcal{T}M$, known as the coefficients of the nonlinear connection $\mathcal{H}$. If a horizontal bundle was chosen, it is convenient to look at the transversal tensors of $\mathcal{V}$ ({\it horizontal tensors}) as tensors on $\mathcal{T}M$ by extending them by zero if evaluated on at least one vertical argument and, similarly, to extend $\mathcal{V}$-tensors ({\it vertical tensors}) by zero on a horizontal argument. On the other hand, we can {\it reflect} vertical tensors to horizontal tensors and vice-versa by first applying $S$, $S'$, respectively, to the arguments. The reflection of $\tau$ will be denoted by $\tau^{S'}$, if $\tau$ is horizontal, respectively, $\tau^{S}$, if $\tau$ is vertical.

Let us fix a decomposition $T\mathcal{T}M=\mathcal{H}\oplus\mathcal{V}$. A {\it Bott connection} is a linear connection on $\mathcal{T}M$ that preserves the subbundles $\mathcal{H},\mathcal{V}$ and satisfies the conditions
\begin{equation}\label{BottV} \nabla_{\mathcal{X}}\mathcal{Y} =pr_{\mathcal{V}}[\mathcal{X},\mathcal{Y}],\,
\nabla_{\mathcal{Y}}\mathcal{X} =pr_{\mathcal{H}}[\mathcal{Y},\mathcal{X}],
\;\mathcal{X}\in\mathcal{H},\mathcal{Y}\in\mathcal{V},\end{equation}
where ``pr" stands for ``projection".

On $\mathcal{T}M$ there exists a unique Bott connection that preserves the tensor fields $S,S'$; it is given by adding to (\ref{BottV})  the derivatives
\begin{equation}\label{Berwald} \nabla^B_{\mathcal{X}}\mathcal{X}'=S'pr_{\mathcal{V}}
[\mathcal{X},S\mathcal{X}'],\, \nabla^B_{\mathcal{Y}}\mathcal{Y}'=Spr_{\mathcal{H}}
[\mathcal{Y},S'\mathcal{Y}'],\end{equation} $\forall\mathcal{X},\mathcal{X}'\in
\mathcal{H},\mathcal{Y},\mathcal{Y}'\in
\mathcal{V}$. $\nabla^B$ is called the {\it Berwald connection} \cite{BM}.

If $D$ is an arbitrary linear connection, we get an {\it associated Bott connection}\footnote{In \cite{Bej0} $\nabla^D$ is called a {\it Vr\u anceanu connection} since the author traced back the history of this connection to a 1931 paper by G. Vr\u anceanu \cite{Vr}.} $\nabla^D$ given by
(\ref{BottV}) and
$$
\nabla^D_{\mathcal{X}}\mathcal{X}'
=pr_{\mathcal{H}}D_{\mathcal{X}}\mathcal{X}',\, \nabla^D_{\mathcal{Y}}\mathcal{Y}'
=pr_{\mathcal{V}}D_{\mathcal{Y}}\mathcal{Y}'\; (\mathcal{X},\mathcal{X}'\in\mathcal{H},\,
\mathcal{Y},\mathcal{Y}'\in\mathcal{V}).
$$

If $\gamma$ is a tangent metric such that $\mathcal{H}\perp_\gamma\mathcal{V}$, the Bott connection $\nabla^D$ associated to the Levi-Civita connection $D$ of $\gamma$ will be called the {\it canonical connection} of $\gamma$. It  is the unique Bott connection that satisfies the conditions \cite{{V71},{V73}}
\begin{equation}\label{propconexcan} \begin{array}{l}
\nabla_{\mathcal{X}}^D\gamma(\mathcal{Y},\mathcal{Z})=0,
\;{\rm for}\;\mathcal{X},\mathcal{Y},\mathcal{Z}\in\mathcal{H}\;{\rm and}\;\mathcal{X},\mathcal{Y},\mathcal{Z}\in\mathcal{V},\vspace*{2mm}\\
pr_{\mathcal{H}}T^{\nabla^D}(\mathcal{X},\mathcal{Y})=0\;{\rm if}\;\mathcal{X},\mathcal{Y}\in\mathcal{H},\;
pr_{\mathcal{V}}T^{\nabla^D}(\mathcal{X},\mathcal{Y})=0\;{\rm if}\;\mathcal{X},\mathcal{Y}\in\mathcal{V}.\end{array}\end{equation}
The restriction of the canonical connection to the vertical leaves is the Levi-Civita connection of the restriction of $\gamma$ to the leaves. We refer the reader to \cite{VL} for the curvature properties of the canonical connection.
\begin{rem}\label{obsnablaDgen} {\rm
The definition of Bott and canonical connection extends to arbitrary foliations on a (pseudo) Riemannian manifold and the characterization (\ref{propconexcan}) is correct in the general case \cite{{V71},{V73}}.}\end{rem}

Formula (\ref{tCartan}) with $\nabla^0$ replaced by $\nabla^B$ and with vertical arguments $\mathcal{Y}$ yields a vertical Cartan tensor $C$ associated with the tangent metric $\gamma$. In tangent bundle geometry, usually, it is the horizontal reflection $C^S$ that is called the (horizontal) Cartan tensor. A local calculation that uses the bases (\ref{Hbases}) yields the formula
$$ C^S(\mathcal{X},\mathcal{X}',\mathcal{X}'')=\nabla^D_{S\mathcal{X}}g (\mathcal{X}',\mathcal{X}''),\;\; \mathcal{X},\mathcal{X}',\mathcal{X}''\in\mathcal{H},$$
where the arguments $\mathcal{X}$ are horizontal and $(\mathcal{H},g)$ is the pair associated to $\gamma$.

A tangent metric $\gamma$ is called a {\it Lagrange metric} if the corresponding tensor $g$ is given by
$g_{ij}=\partial^2\mathcal{L}/\partial y^i\partial y^j$,
where the (continuous and smooth outside the zero section) function $\mathcal{L}$ on $\mathcal{T}M$ is a {\it regular Lagrangian} (regularity means that $g$ is non-degenerate).
The pair $(M,\mathcal{L})$ is called a {\it Lagrange manifold} \cite{BM}.
{\it Finsler metrics} are Lagrange metrics with a Lagrangian of the form $\mathcal{L}=\mathcal{F}^2$, where $\mathcal{F}$ is positive and positive homogeneous of degree $1$ and the corresponding Lagrange metric is positive definite \cite{BCS}. Then, $(M,\mathcal{F})$ is a {\it Finsler manifold}. The functions $\mathcal{L},\mathcal{F}$ are also called a Lagrange and Finsler metric, respectively.

The tensor $C^S$ is totally symmetric iff $\gamma$ is a {\it locally Lagrange metric}, i.e., each point has a neighborhood where $\gamma$ is a Lagrange metric. But, such a metric $\gamma$ is a globally Lagrange metric iff some cohomological obstructions vanish \cite{VL}.
\begin{rem}\label{obsconexChern} {\rm A regular Lagrangian $\mathcal{L}$ defines a canonical horizontal bundle $\mathcal{H}_{\mathcal{L}}$ called the {\it Cartan nonlinear connection} \cite{{BCS},{BM}} and there exists a canonical tangent metric
$\gamma_{\mathcal{L}}$ associated with the pair $(\mathcal{H}_{{\mathcal{L}}},g)$.
The restriction $\nabla^C$ of the canonical connection of a tangent metric $\gamma$ to $\mathcal{H}$ may be called the {\it Chern connection}, because it coincides with the {\it Rund-Chern connection} in the case of Finsler manifold. This is shown by a comparison of the connection coefficients given by formula (2.20) of \cite{V71} and (2.4.10) of \cite{BCS}.
The $S$-reflection $\nabla^{H}$ of the restriction of the canonical connection of $\gamma$ to $\mathcal{V}$ will be called the {\it Hashiguchi connection}, again, because it yields the connection bearing this name in Finsler geometry, as shown by a comparison of the connection coefficients calculated by formulas (2.20), (2.21) of \cite{V71} and Theorem 5.6.4 of \cite{BM}. Furthermore, in the Lagrange and Finsler case, the restriction of the Berwald connection $\nabla^B$ to $\mathcal{H}$ is the Berwald connection of Finsler and Lagrange geometry.}\end{rem}
\section{Lagrange-Hessian geometry}
Formula (\ref{coordtr}) shows that the vertical leaves of a tangent manifold $\mathcal{T}M$ are affine manifolds with affine coordinates $(y^i)$ and a (locally) Lagrange metric produces  (pseudo) Hessian metrics of the vertical leaves, which differentiably depend on the ``parameters" $x^i$. We refer to the geometry of this {\it leaf-wise (pseudo) Hessian metric} as {\it Lagrange-Hessian geometry}.

For instance, we will consider the {\it Lagrange-Hessian curvature} as follows.
A vector field $\mathcal{Y}\in\mathcal{V}$ is parallel on the leaves iff $S'\mathcal{Y}$ is projectable to $M$. It follows that the second formula (\ref{Berwald}) is equivalent to the fact that $\nabla^B_{\mathcal{Y}}\mathcal{Y}'=0$ for vertical, parallel vector fields $\mathcal{Y}'$, and we see that $\nabla^B$ is flat along the vertical leaves. $\nabla^B|_{\mathcal{V}}$ is also torsionless since $[Y,Y']=0$ for any vertical, parallel vector fields $Y,Y'$. Hence, $\nabla^B|_{\mathcal{V}}$ is the connection with the role of $\nabla^0$ of Section 1. On the other hand, if $\gamma$ is a tangent metric of $\mathcal{T}M$ with the corresponding horizontal bundle $\mathcal{H}=\mathcal{V}^{\perp_\gamma}$ and the corresponding transversal metric $g$, the Levi-Civita connection of the vertical leaves is the restriction of the canonical connection $\nabla^D$ of $\gamma$. Therefore, $\nabla^D$ plays the role of the connection $\nabla$ of Section 1 and we have a {\it difference tensor}
$$ \Phi(\mathcal{Z},\mathcal{Z}') =\nabla^D_{\mathcal{Z}}\mathcal{Z}'-\nabla^B_{\mathcal{Z}}\mathcal{Z}',
\;\mathcal{Z},\mathcal{Z}'\in T\mathcal{T}M.
$$
In particular, $\Phi(\mathcal{X},\mathcal{Y})=0,\;\Phi(\mathcal{Y},\mathcal{X})=0$, if $\mathcal{X}\in\mathcal{H},\mathcal{Y}\in\mathcal{V}$, and properties (\ref{propPhi}) with $\nabla$ replaced by $\nabla^D$ hold.

Then, formulas (\ref{eqHessc}) with vertical, parallel arguments $\mathcal{Y}$ define the notion of Lagrange-Hessian curvature $Q$ of a tangent, in particular a Lagrange, metric.
\begin{example}\label{excurbct} {\rm Assume that the tangent metric $\gamma$ is projectable, i.e., the horizontal, tensorial components $g_{ij}$ of $\gamma$ depend only on $x$. This implies that we are in the Lagrange case, namely, $g_{ij}=(1/2)(\partial^2(g_{ij}y^iy^j)/ \partial y^i\partial y^j)$ and that the Cartan tensor $C$ vanishes. Since $g_{ij}$ are constant along the vertical leaves, the Christoffel symbols of each leaf vanish and formulas (\ref{exprQgotic}), (\ref{Xi}) and (\ref{tCartan1}) (which hold in the present case too since $x^i$ are just ``parameters") imply the vanishing of the Lagrange-Hessian curvature.}\end{example}
\begin{rem}\label{relHH} {\rm The Lagrange-Hessian curvature defines the Hashiguchi curvature operator in vertical directions by the formula
$$\gamma(R_{\nabla^H}(S\mathcal{X}_1,S\mathcal{X}_2)\mathcal{X},\mathcal{X}')
=\frac{1}{2}[Q^S(\mathcal{X}',\mathcal{X}_1, \mathcal{X}_2,\mathcal{X})
-Q^S(\mathcal{X}',\mathcal{X}_2,\mathcal{X}_1,\mathcal{X})],
$$
where $\mathcal{X}$ with and without indices are horizontal vectors and the upper index $S$ denotes reflection.
Indeed, the definition of the Hashiguchi connection implies
$$R_{\nabla^H}(\mathcal{Z}_1,\mathcal{Z}_2)\mathcal{X}=
S'R_{\nabla^D}(\mathcal{Z}_1,\mathcal{Z}_2)(S\mathcal{X})\hspace{3mm} (\mathcal{Z}_1,\mathcal{Z}_2\in T\mathcal{T}M),$$
whence,
$$\gamma(R_{\nabla^H}(S\mathcal{X}_1,S\mathcal{X}_2)\mathcal{X},\mathcal{X}')
=R_{\nabla^D}(S\mathcal{X}',S\mathcal{X},S\mathcal{X}_1,S\mathcal{X}_2).$$
Thus, formula (\ref{relRQ}) implies the required relation.}\end{rem}
\begin{rem}\label{obsargoriz} {\rm For horizontal arguments, assumed to be projectable vector fields, the torsion terms of the second formula (\ref{curbDB}) for $\nabla^D,\nabla^B$ vanish and the curvature is
$$ \begin{array}{r}
R_{\nabla^D}(\mathcal{X}_1,\mathcal{X}_2, \mathcal{X}_3,\mathcal{X}_4)=R_{\nabla^B}(\mathcal{X}_1,\mathcal{X}_2, \mathcal{X}_3,\mathcal{X}_4)\vspace*{2mm}\\	 +\mathfrak{Q}(\mathcal{X}_1,\mathcal{X}_3, \mathcal{X}_4,\mathcal{X}_2)-\mathfrak{Q}(\mathcal{X}_1,\mathcal{X}_4, \mathcal{X}_3,\mathcal{X}_2).\end{array}$$}\end{rem}

Now, we address the subject of the correspondence Hessian versus K\"ahler and we will show how to extend the correspondence between a Hessian metric on the locally affine manifold $N$ and a K\"ahler metric on the tangent manifold $\mathcal{T}N$ \cite{S} to Lagrange-Hessian metrics of a Lagrange space $M^m$. There is no need to recall the original construction of \cite{S} because it amounts to the case of an isolated leaf in the general construction.

We consider the total space $\mathcal{T}(\mathcal{V})$ of the tangent bundle of the vertical leaves. This is a $3m$-dimensional manifold, which we will call the {\it vertical tangent manifold} of $M$. The iterated tangent manifold $\mathcal{T}(\mathcal{T}M)$ has local coordinates $(x^i,y^i,\xi^i,\eta^i)$, where $x,y$ change by formulas (\ref{coordtr}) and $\xi,\eta$ are vector coordinates with respect to the bases $(\partial/\partial x^i,\partial/\partial y^i)$ and change as follows
\begin{equation}\label{coordtr2} \tilde{\xi}^i=\frac{\partial\tilde{x}^i}{\partial x^j}\xi^j,
\,\tilde{\eta}^i=\frac{\partial\tilde{y}^i}{\partial x^j}\xi^j
+\frac{\partial\tilde{x}^i}{\partial x^j}\eta^j.
\end{equation}
The vertical tangent manifold $\mathcal{T}(\mathcal{V})$ is the submanifold
of $\mathcal{T}(\mathcal{T}M)$ defined by $\xi^i=0$. On the other hand, $\mathcal{T}M$ may be identified with the submanifold $\mathcal{T}(\mathcal{V})$ defined by $\eta^i=0$, which is the zero section of the projection $q: \mathcal{T}(\mathcal{V})\rightarrow\mathcal{T}M$.

Formulas (\ref{coordtr2}) show that the projection
$p:\mathcal{T}(\mathcal{V})\rightarrow M$ given by $p(x,y,\eta)=x$ is the Whitney sum $\mathfrak{V}=\mathcal{V}_1\oplus\mathcal{V}_2$, where
$\mathcal{V}_2\approx\mathcal{V}_1=\mathcal{V}$. $\mathfrak{V}$ will be called the {\it double vertical bundle (foliation)} and we will denote $\iota_a:\mathcal{V}_a\rightarrow\mathfrak{V}$ $(a=1,2)$ the identification of $\mathcal{V}$ with the two terms of $\mathfrak{V}$. Notice also the {\it flip involution} $\phi:\mathcal{T}(\mathcal{V})\rightarrow\mathcal{T}(\mathcal{V})$ defined by $\phi(x,y,\eta)=(x,\eta,y)$.

Another way of looking at the manifold $\mathcal{T}(\mathcal{V})$ is to identify it with the total space of the complexified tangent bundle $T^cM=TM\otimes\mathds{C}$ such that $\mathcal{V}_1$ is the real part and $\mathcal{V}_2$ is the imaginary part of the complexification. This interpretation shows that a horizontal bundle $\mathcal{H}$ on $\mathcal{T}M$ may also be seen as a horizontal bundle on $\mathcal{T}(\mathcal{V})$, i.e., $T\mathcal{T}(\mathcal{V})=\mathcal{H}\oplus\mathfrak{V}$.

Locally, on $\mathcal{T}(\mathcal{V})$ we have tangent bases $(X_i,\partial/\partial y^i,\partial/\partial\eta^i)$, where $X_i$ is given by (\ref{Hbases}), and dual bases
\begin{equation}\label{cobaseTV}
\theta^i=dy^i+t^i_jdx^j,\,\kappa^i=d\eta^i+t^i_jdx^j
 \end{equation} with the same coefficients $t^i_j$.

We recall that a CR structure is a complex tangent distribution $E$ that is involutive and such that $E\cap\bar{E}=0$ (the bar denotes complex conjugation). On the other hand, a tangent bundle endomorphism $F$ such that $F^3+F=0$ is an F structure. Then,  $F$ has the eigenvalues $\pm i,0$ and, if the $i$-eigenbundle $E$ is involutive, $F$ is a CRF structure \cite{VCRF}.
\begin{prop}\label{strCRF} For any choice of a horizontal bundle, there exists a canonical CRF structure $\mathfrak{J}$ on the vertical tangent manifold $\mathcal{T}(\mathcal{V})$.\end{prop}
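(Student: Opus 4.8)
The plan is to let $\mathfrak{J}$ vanish on the horizontal bundle $\mathcal{H}$ and act as the standard complex rotation of the double vertical bundle $\mathfrak{V}=\mathcal{V}_1\oplus\mathcal{V}_2$. Using the canonical identifications $\iota_a:\mathcal{V}\to\mathcal{V}_a$, I would set
$$\mathfrak{J}|_{\mathcal{H}}=0,\qquad \mathfrak{J}(\iota_1 v)=\iota_2 v,\qquad \mathfrak{J}(\iota_2 v)=-\iota_1 v\quad(v\in\mathcal{V}),$$
so that in the local frame dual to (\ref{cobaseTV}) one has $\mathfrak{J}(\partial/\partial y^i)=\partial/\partial\eta^i$ and $\mathfrak{J}(\partial/\partial\eta^i)=-\partial/\partial y^i$. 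This is simply multiplication by $i$ under the identification $\mathfrak{V}\cong T^cM$ with $\mathcal{V}_1,\mathcal{V}_2$ the real and imaginary parts recalled above. The first task is to confirm that $\mathfrak{J}$ is intrinsically defined. Since $\mathcal{H}$ is the prescribed complement and $\mathcal{V}_1,\mathcal{V}_2$ are given subbundles, everything reduces to checking that the rotation $\iota_1\leftrightarrow\iota_2$ is coordinate free; this follows because on the submanifold $\xi^i=0$ the law (\ref{coordtr2}) becomes $\tilde\eta^i=(\partial\tilde x^i/\partial x^j)\eta^j$, so $\eta$ and $y$ transform by the same Jacobian and $\partial/\partial y^i,\partial/\partial\eta^i$ correspond under every change of chart.

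The $F$-structure identity is then purely algebraic: $\mathfrak{J}^2=-\mathrm{pr}_{\mathfrak{V}}$ (equal to $-\mathrm{id}$ on $\mathfrak{V}$ and to $0$ on $\mathcal{H}$), whence $\mathfrak{J}^3=-\mathfrak{J}$ and $\mathfrak{J}^3+\mathfrak{J}=0$. Thus $\mathfrak{J}$ is an $F$ structure with eigenvalues $0,\pm i$, its $0$-eigenbundle being exactly $\mathcal{H}$ — the only place where the choice of nonlinear connection enters.

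It remains to check that the $i$-eigenbundle $E\subset T^c\mathcal{T}(\mathcal{V})$ is involutive (its transversality $E\cap\bar E=0$ is automatic, since $\bar E$ is the $-i$-eigenbundle). A direct eigenvalue computation shows $E\subset\mathfrak{V}\otimes\mathds{C}$, spanned locally by $Z_j=\partial/\partial y^j-i\,\partial/\partial\eta^j$ (invariantly, by the vectors $\iota_1 v-i\,\iota_2 v$). For involutivity I would use that $E$ lies in the complexified tangent bundle of the double vertical foliation $\mathfrak{V}$, along whose leaves $(y^i,\eta^i)$ are affine coordinates: the $Z_j$ are constant-coefficient combinations of the commuting fields $\partial/\partial y^j,\partial/\partial\eta^j$, so $[Z_j,Z_k]=0$, and for arbitrary sections $[f^jZ_j,g^kZ_k]=f^j(Z_jg^k)Z_k-g^k(Z_kf^j)Z_j\in\Gamma(E)$. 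Hence $E$ is involutive and $\mathfrak{J}$ is a CRF structure.

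No step presents a real obstacle; the only point deserving care is the well-definedness in the first paragraph, i.e. confirming via (\ref{coordtr2}) that $y$ and $\eta$ share the same transformation rule on $\xi^i=0$. I would emphasize that it is precisely the decision to house $E$ inside the integrable bundle $\mathfrak{V}$ — rather than mixing in horizontal directions, which need not be integrable — that makes involutivity automatic, and that consequently $E$, and with it the underlying CR structure, is the same for every horizontal bundle, only $\ker\mathfrak{J}=\mathcal{H}$ recording the choice.
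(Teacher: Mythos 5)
Your proposal is correct and follows the same route as the paper: define $\mathfrak{J}$ to vanish on $\mathcal{H}$ and to act as multiplication by $i$ on $\mathfrak{V}\approx p^{-1}(T^cM)$, with the local expression $\mathfrak{J}(\partial/\partial y^i)=\partial/\partial\eta^i$, $\mathfrak{J}(\partial/\partial\eta^i)=-\partial/\partial y^i$, from which the $F$-structure identity and the involutivity of the $i$-eigenbundle follow. You merely spell out the well-definedness under (\ref{coordtr2}) and the leafwise integrability argument that the paper compresses into the remark that ``the local expression shows the integrability.''
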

\begin{proof} On $\mathfrak{V}\approx p^{-1}(T^cM)$, multiplication by $i$ defines a complex bundle structure, which provides a complex structure $\mathfrak{J}_{\mathfrak{V}}$ along the {\it double vertical leaves} with the local expression
 \begin{equation}\label{JVlocal}
\mathfrak{J}_{\mathfrak{V}}\frac{\partial}{\partial y^i}=\frac{\partial}{\partial\eta^i},\;
\mathfrak{J}_{\mathfrak{V}}\frac{\partial}{\partial\eta^i} =
-\frac{\partial}{\partial y^i}.\end{equation}The local expression shows the integrability of $\mathfrak{J}_{\mathfrak{V}}$ along the leaves.
We get the required tensor $\mathfrak{J}$ by putting $\mathfrak{J}|_{\mathfrak{V}}=\mathfrak{J}_{\mathfrak{V}}, \mathfrak{J}|_{\mathcal{H}}=0$. \end{proof}

The leaf-wise complex structure $\mathfrak{J}_{\mathfrak{V}}$ may also be defined by means of the endomorphisms $S_a$ $(a=1,2)$ defined on $T\mathcal{T}(\mathcal{V})$ by the formula $S_a(\mathfrak{X})=(\iota_a\circ S)(p_*\mathfrak{X})^h$, where $\mathfrak{X}\in T\mathcal{T}(\mathcal{V})$ and $S$ is the tangent structure of $\mathcal{T}M$. The tensor fields $S_a$ are Nijenhuis tensors of a constant rank such that $S_a^2=0$. The vanishing of $\mathcal{N}_{S_a}$ follows from the local expressions
$$S_1\frac{\partial}{\partial x^i}=\frac{\partial}{\partial y^i}, S_2\frac{\partial}{\partial x^i}=\frac{\partial}{\partial \eta^i}.$$
The structure $\mathfrak{J}_{\mathfrak{V}}$ is determined by the equalities
$$ \mathfrak{J}_{\mathfrak{V}}\circ S_1=S_2,\;\mathfrak{J}_{\mathfrak{V}}\circ S_2=-S_1.$$

We shall need metrics that are the analog of tangent metrics and it is convenient to define them using the tensors $S_a$.
\begin{defin}\label{defadaptmetric} {\rm A (pseudo) Riemannian metric $\mathfrak{g}$ on $\mathcal{T}(\mathcal{V})$ will be a {\it double tangent metric} if $\mathcal{V}_1\perp_{\mathfrak{g}}\mathcal{V}_2$, $\mathfrak{g}|_{\mathcal{V}_2}$is non degenerate and
$$\mathfrak{g}(S_1\mathcal{X},S_1\mathcal{X}')= \mathfrak{g}(S_2\mathcal{X},S_2\mathcal{X}')= \mathfrak{g}(\mathcal{X},\mathcal{X}'),\; \forall\mathcal{X},\mathcal{X}' \in\mathcal{H}\perp_{\mathfrak{g}}\mathfrak{V}.$$}\end{defin}

With the cotangent bases (\ref{cobaseTV}), a double tangent metric may be written as
\begin{equation}\label{Jlocal}	\mathfrak{g}= g_{ij}dx^i\otimes dx^j+g_{ij}\theta^i\otimes\theta^j +g_{ij}\kappa^i\otimes\kappa^j.\end{equation}
If $\mathfrak{g}$ is a double tangent (pseudo) Riemannian metric, $\mathfrak{g}|_{\mathcal{V}_1}$ and $\mathfrak{g}|_{\mathfrak{V}}$ are non degenerate and $\phi$ is an isometry. For a function $\mathcal{L}$ on $\mathcal{T}(\mathcal{V})$, the horizontal tensor $(\partial^2\mathcal{L}/\partial y^i\partial y^j)dx^i\otimes dx^j$ is still invariant, and non degenerate in the {\it regular} case. Accordingly, we may extend the notions of {\it locally Lagrange}, {\it Lagrange} and {\it Finsler} to double tangent metrics.

It follows easily that any double tangent metric $\mathfrak{g}$ is compatible with the CRF structure tensor $\mathfrak{J}$ in the sense that
$$ \mathfrak{g}(\mathfrak{J}\mathfrak{Z},\mathfrak{Z}') +\mathfrak{g}(\mathfrak{Z},\mathfrak{J}\mathfrak{Z}')=0,
\;\forall\mathfrak{Z},\mathfrak{Z}'\in T\mathcal{T}(\mathcal{V})$$ and, with the terminology of \cite{VCRF}, $(\mathfrak{J},\mathfrak{g})$ is a {\it metric CRF structure}. This implies that the restriction of $\mathfrak{g}$ to the leaves of $\mathfrak{V}$ are Hermitian for the complex structure $\mathfrak{J}_{\mathfrak{V}}$.

Clearly, the double tangent metrics are in a bijective correspondence with pairs $(\mathcal{H},g)$ where $\mathcal{H}$ is a horizontal bundle on $\mathcal{T}(\mathcal{V})$ and $g$ is a non degenerate metric on $\mathcal{H}$.
A tangent metric $\gamma$ on $\mathcal{T}M$ defines a horizontal bundle $\mathcal{H}$ endowed with a metric $g$ and the interpretation of $\mathcal{T}(\mathcal{V})$ by means of $T^cM$ allows the identification of $(\mathcal{H},g)$ with a similar pair on $\mathcal{T}(\mathcal{V})$. Accordingly, we get a double tangent metric $\mathfrak{g}_{\gamma}$ on $\mathcal{T}(\mathcal{V})$ called the {\it extension of $\gamma$}.
If the two first terms of (\ref{Jlocal}) express the tangent metric $\gamma$ on $\mathcal{T}M$, (\ref{Jlocal}) is the extension of the former to $\mathcal{T}(\mathcal{V})$.

The next proposition shows the correspondence between locally Lagrange metrics on $\mathcal{T}M$ and the (pseudo) K\"ahler metrics on $\mathcal{T}(\mathcal{V})$.
\begin{prop}\label{propKahler} Let $\gamma$ be a tangent metric on $\mathcal{T}M$ and $\mathfrak{g}_{\gamma}$ its extension to $\mathcal{T}(\mathcal{V})$. Then, the restriction of $\mathfrak{g}_{\gamma}$ to the leaves of $\mathfrak{V}$ is a (pseudo) K\"ahler metric iff $\gamma$ is locally Lagrange.\end{prop}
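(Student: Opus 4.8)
The plan is to work leaf-wise and reduce the statement to the classical Hessian-versus-Kähler correspondence along a single leaf of the double vertical foliation $\mathfrak{V}$. Fix a point and restrict attention to the leaf of $\mathfrak{V}$ through it; this leaf is a $2m$-dimensional complex manifold with the leaf-wise complex structure $\mathfrak{J}_{\mathfrak{V}}$ of (\ref{JVlocal}), and with holomorphic coordinates that one can form from the affine coordinates $(y^i,\eta^i)$ of the two vertical factors, say $z^i=y^i+\sqrt{-1}\,\eta^i$. Since $\mathfrak{g}_{\gamma}$ is a double tangent metric, its restriction to the leaf is, by (\ref{Jlocal}), $g_{ij}\,\theta^i\otimes\theta^j+g_{ij}\,\kappa^i\otimes\kappa^j$, which along the leaf (where $dx^i=0$) reduces to $g_{ij}(dy^i\otimes dy^j+d\eta^i\otimes d\eta^j)$, with the \emph{same} coefficient matrix $g_{ij}$ in the $y$-block and the $\eta$-block. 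We have already observed that this makes the restriction Hermitian for $\mathfrak{J}_{\mathfrak{V}}$; the content of the proposition is the Kähler condition, i.e.\ closedness of the associated fundamental $2$-form $\omega$.

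The core computation is therefore to expand $\omega=g_{ij}\,dy^i\wedge d\eta^j$ (up to a constant and a symmetrization) along the leaf and compute $d\omega$. Because $x^i$ and $dx^i$ are inert along the leaf (the $x^i$ are mere parameters, exactly as in the remark about the vertical leaves being affine manifolds), only the $y$- and $\eta$-derivatives survive, and crucially $g_{ij}$ depends on $\eta$ \emph{in the same way} it depends on $y$ under the identification $\mathcal{V}_2\approx\mathcal{V}_1$ coming from $T^cM$. Writing $g_{ij}=g_{ij}(x,y)$ transported to both factors, the exterior derivative $d\omega$ collects the antisymmetrizations of $\partial g_{ij}/\partial y^k$ and $\partial g_{ij}/\partial\eta^k$. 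These partial derivatives are precisely the components $C_{ijk}$ of the leaf-wise Cartan tensor of (\ref{tCartan1}), since along the affine leaf the Berwald connection $\nabla^B|_{\mathcal{V}}$ plays the role of $\nabla^0$. The expected outcome is that $d\omega=0$ if and only if $C$ is totally symmetric in its three indices.

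The decisive step, and the one I expect to be the main obstacle in presentation rather than in substance, is to match this symmetry condition with the locally-Lagrange hypothesis. By the discussion preceding Remark \ref{obsconexChern}, the tensor $C^{S}$ (equivalently the vertical Cartan tensor $C$, since $S$ is a bundle isometry) is totally symmetric \emph{iff} $\gamma$ is a locally Lagrange metric. Thus one direction shows that locally Lagrange forces the full symmetry $C_{ijk}=C_{(ijk)}$, which upon substitution kills $d\omega$ and yields the Kähler property; the converse reads the vanishing of $d\omega$ back as the symmetry of $C$, whence, by the same criterion, local existence of a Lagrangian $\mathcal{L}$ with $g_{ij}=\partial^2\mathcal{L}/\partial y^i\partial y^j$. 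The subtle point to handle carefully is bookkeeping between the two copies $\mathcal{V}_1,\mathcal{V}_2$ of $\mathcal{V}$: one must verify that the cross-derivative $\partial g_{ij}/\partial\eta^k$ equals $\partial g_{ij}/\partial y^k$ under the identification, so that the Hermitian structure genuinely comes from a single potential. This is exactly the mechanism by which the classical statement of \cite{S}, recovered here as the case of an isolated leaf, generalizes, and no genuinely new analytic difficulty arises beyond this index-tracking.
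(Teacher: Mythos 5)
Your overall route is the same as the paper's: pass to holomorphic leaf coordinates $z^i=y^i+\sqrt{-1}\,\eta^i$, write the fundamental form as $\omega=g_{ij}\,dy^i\wedge d\eta^j$, and identify the closedness of $\omega$ along a leaf with the total symmetry of the Cartan tensor, hence with $\gamma$ being locally Lagrange. However, the step you single out as ``decisive'' --- that $g_{ij}$ depends on $\eta$ in the same way as on $y$, so that one must verify $\partial g_{ij}/\partial\eta^k=\partial g_{ij}/\partial y^k$ --- is false, and an attempt to verify it would fail. By the construction of the extension $\mathfrak{g}_{\gamma}$, the coefficients $g_{ij}$ in (\ref{Jlocal}) are the horizontal components of $\gamma$ pulled back through the projection $q:\mathcal{T}(\mathcal{V})\rightarrow\mathcal{T}M$; they are functions of $(x,y)$ only, so $\partial g_{ij}/\partial\eta^k=0$, which is not equal to $\partial g_{ij}/\partial y^k$ in general. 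There is no ``single potential in $z,\bar z$'' to be extracted, and no cross-derivative identity to check.

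The good news is that the correct fact makes the computation shorter and still delivers the statement: along a leaf $x=\mathrm{const.}$ one gets $d\omega=(\partial g_{ij}/\partial y^k)\,dy^k\wedge dy^i\wedge d\eta^j$ with no $d\eta^k$-term at all, so $d\omega=0$ iff $\partial g_{ij}/\partial y^k=\partial g_{kj}/\partial y^i$; combined with the symmetry of $g_{ij}$ in $i,j$ this is exactly the total symmetry of $C_{kij}=\partial g_{ij}/\partial y^k$ from (\ref{tCartan1}), i.e.\ the locally Lagrange condition. So the argument is salvageable by deleting the erroneous premise, but as written the proof hinges on a claim about the extension metric that contradicts its definition.
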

\begin{proof} Formula (\ref{JVlocal}) shows that $z^i=y^i+\sqrt{-1}\eta^i$ are holomorphic coordinates along the leaves $x^i=const.$. Then, from (\ref{Jlocal}) we see that the metric induced by $\mathfrak{g}_{\gamma}$ on these leaves is given by
$$\mathfrak{g}_{\gamma}|_{\mathfrak{V}}=g_{ij}dz^i\otimes d\bar{z}^j$$ and the corresponding K\"ahler form is
$$\omega=\frac{i}{2}g_{ij}dz^i\wedge d\bar{z}^j=g_{ij}dy^i\wedge d\eta^j,$$
where $g_{ij}$ are the horizontal components of the given metric $\gamma$.
Since $g_{ij}=g_{ij}(x,y)$, it follows that $d\omega=0$ along the leaves $x=const.$ iff
$\partial g_{ij}/\partial y^k=\partial g_{kj}/\partial y^i$, i.e., iff the Cartan tensor of $\gamma$ is symmetric, therefore, $\gamma$ is a locally Lagrange metric.\end{proof}

In order to get a corresponding relationship between the Lagrange-Hessian and K\"ahler-Riemannian curvatures we need an adequate connection, which is provided by the following proposition.
\begin{prop}\label{propconexherm} Let $\mathfrak{g}$ be a double tangent metric on $\mathcal{T}(\mathcal{V})$. Then, there exists a unique Bott connection $\mathfrak{D}$, with respect to the foliation $\mathfrak{V}$, that has the following properties:\\
1) $(\mathfrak{D}_{\mathfrak{X}}\mathfrak{g})(\mathfrak{X}',\mathfrak{X}'')=0$,
$(\mathfrak{D}_{\mathfrak{Y}}\mathfrak{g})(\mathfrak{Y}',\mathfrak{Y}'')=0$,
\\ \noindent
2) $pr_{\mathcal{H}}T_{\mathfrak{D}}(\mathfrak{X},\mathfrak{X}')=0$,\\ \noindent
3) $\mathfrak{D}_{\mathfrak{Y}}(\mathfrak{J}\mathfrak{Y}')=
\mathfrak{J}(\mathfrak{D}_{\mathfrak{Y}}\mathfrak{Y}')$,\\ \noindent
4) $T_{\mathfrak{D}}(\mathfrak{J}\mathfrak{Y},\mathfrak{Y}')
=T_{\mathfrak{D}}(\mathfrak{Y},\mathfrak{J}\mathfrak{Y}')$,\\
\noindent
where $\mathfrak{X},\mathfrak{X}',\mathfrak{X}''\in\mathcal{H}$, $\mathfrak{Y},\mathfrak{Y}',\mathfrak{Y}''\in\mathfrak{V}$.
\end{prop}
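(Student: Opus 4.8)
The plan is to exploit the fact that $\mathfrak{D}$ is required to be a Bott connection with respect to $\mathfrak{V}$, so its mixed components $\mathfrak{D}_{\mathfrak{X}}\mathfrak{Y}$ and $\mathfrak{D}_{\mathfrak{Y}}\mathfrak{X}$ (for $\mathfrak{X}\in\mathcal{H}$, $\mathfrak{Y}\in\mathfrak{V}$) are already prescribed by the defining equations of a Bott connection, exactly as in (\ref{BottV}). Hence only the two \emph{pure} blocks are free, namely the horizontal map $(\mathfrak{X},\mathfrak{X}')\mapsto\mathfrak{D}_{\mathfrak{X}}\mathfrak{X}'\in\mathcal{H}$ and the vertical map $(\mathfrak{Y},\mathfrak{Y}')\mapsto\mathfrak{D}_{\mathfrak{Y}}\mathfrak{Y}'\in\mathfrak{V}$, and the four conditions decouple into two independent determination problems, one for each block. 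Existence and uniqueness of $\mathfrak{D}$ will follow once each block is shown to be uniquely determined.

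For the horizontal block I would argue exactly as in the construction of the canonical connection characterized by (\ref{propconexcan}). Since $\mathfrak{D}_{\mathfrak{X}}\mathfrak{X}',\mathfrak{D}_{\mathfrak{X}'}\mathfrak{X}\in\mathcal{H}$, condition $2)$ reads $\mathfrak{D}_{\mathfrak{X}}\mathfrak{X}'-\mathfrak{D}_{\mathfrak{X}'}\mathfrak{X}=pr_{\mathcal{H}}[\mathfrak{X},\mathfrak{X}']$, while the first equation of $1)$ is metric compatibility of $\mathfrak{g}|_{\mathcal{H}}$ along horizontal directions. This pair has the same algebraic form as the pair (metric compatibility, prescribed torsion) that fixes the Levi-Civita connection, so the usual cyclic permutation of the three arguments yields a Koszul-type formula for $\mathfrak{g}(\mathfrak{D}_{\mathfrak{X}}\mathfrak{X}',\mathfrak{X}'')$; as $\mathfrak{g}|_{\mathcal{H}}$ is non-degenerate this determines $\mathfrak{D}_{\mathfrak{X}}\mathfrak{X}'$ uniquely, and conversely defining it by that formula and checking tensoriality gives existence.

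The vertical block is the heart of the matter, and I would reduce it to the Chern connection. Along a leaf of $\mathfrak{V}$ the data $(\mathfrak{J}_{\mathfrak{V}},\mathfrak{g}|_{\mathfrak{V}})$ is a (pseudo-)Hermitian structure, as noted after Definition \ref{defadaptmetric}, and, because $\mathfrak{D}$ preserves $\mathfrak{V}$ while $\mathfrak{Y},\mathfrak{Y}'$ are leaf-tangent, both $\mathfrak{D}_{\mathfrak{Y}}\mathfrak{Y}'$ and $T_{\mathfrak{D}}(\mathfrak{Y},\mathfrak{Y}')$ are purely leaf-wise. Condition $1)$ (second equation) is metric compatibility, condition $3)$ is $\mathfrak{D}\mathfrak{J}_{\mathfrak{V}}=0$, and condition $4)$ is the vanishing of the $(1,1)$-part of the torsion: substituting $\mathfrak{J}\mathfrak{Y}'$ for $\mathfrak{Y}'$ in $4)$ and using $\mathfrak{J}^2=-\mathrm{id}$ on $\mathfrak{V}$ gives $T_{\mathfrak{D}}(\mathfrak{J}\mathfrak{Y},\mathfrak{J}\mathfrak{Y}')=-T_{\mathfrak{D}}(\mathfrak{Y},\mathfrak{Y}')$, and a short computation with the $(1,0)$-vectors $\mathfrak{Y}-\sqrt{-1}\,\mathfrak{J}\mathfrak{Y}$ shows these two relations together are equivalent to $T_{\mathfrak{D}}(Z,\bar W)=0$ for $Z,W$ of type $(1,0)$. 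Thus $1),3),4)$ are precisely the three defining properties of the Chern connection of a Hermitian manifold, whose existence and uniqueness are classical; applied on each leaf, in the holomorphic frame $z^i=y^i+\sqrt{-1}\,\eta^i$ of the proof of Proposition \ref{propKahler}, they produce a unique smooth vertical block, with smooth dependence on the transverse coordinates $x$.

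Assembling the two blocks with the prescribed mixed components yields $\mathfrak{D}$, and uniqueness follows from the uniqueness of each block. I expect the main obstacle to be the vertical block: one must verify carefully that condition $4)$ is equivalent to the vanishing of the $(1,1)$-torsion in the real picture, and that the leaf-wise Chern connections assemble into a globally defined smooth connection on $\mathcal{T}(\mathcal{V})$. Writing the explicit Koszul-type formula for the Chern connection in the complex frame above settles existence, uniqueness and smoothness simultaneously.
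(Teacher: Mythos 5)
Your proposal is correct and follows essentially the same route as the paper: the Bott condition fixes the mixed blocks, a Koszul-type argument from conditions 1) and 2) fixes the horizontal block, and conditions 1), 3), 4) single out the leaf-wise Hermitian (Chern) connection on the vertical block. The only cosmetic difference is that the paper re-derives that vertical block explicitly by complexifying, using 4) to get $\mathfrak{D}_{\mathfrak{Y}}\mathfrak{Y}'=pr_{\bar{E}}[\mathfrak{Y},\mathfrak{Y}']$ for mixed-type arguments and then metric compatibility for the pure-type derivatives, whereas you invoke the classical existence--uniqueness theorem for the Chern connection directly --- which is exactly the identification the paper itself makes immediately after its proof (citing \cite{KN}, Proposition IX.10.2).
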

\begin{proof}
Let us extend the field of scalars to $\mathds{C}$ and define the Hermitian metric $\mathfrak{g}^c$ on $T^c\mathcal{T}(\mathcal{V})$ by
$$\mathfrak{g}^c(i\mathfrak{Z},\mathfrak{Z}') =i\mathfrak{g}(\mathfrak{Z},\mathfrak{Z}'),
\;\mathfrak{g}^c(\mathfrak{Z},i\mathfrak{Z}') =-i\mathfrak{g}(\mathfrak{Z},\mathfrak{Z}'),$$
where $\mathfrak{Z},\mathfrak{Z}'$ are real vectors. We also extend connections $\mathfrak{D}$ to complex vector fields by requiring complex linearity. Being a Bott connection, the required $\mathfrak{D}$ preserves $\mathcal{H},\mathfrak{V}$ and
$$\mathfrak{D}_{\mathfrak{X}}\mathfrak{Y}=pr_{\mathfrak{V}}[\mathfrak{X},\mathfrak{Y}] \,
\mathfrak{D}_{\mathfrak{Y}}\mathfrak{X}=pr_{\mathcal{H}}[\mathfrak{Y},\mathfrak{X}] \hspace{2mm}
(\mathfrak{X}\in\mathcal{H},\mathfrak{Y}\in\mathfrak{V}).$$
By property 3), $\mathfrak{D}$ also preserves the eigenbundles $E,\bar{E}$ of $\mathfrak{J}_{\mathfrak{V}}$ and, if $\mathfrak{Y}\in E,\mathfrak{Y}'\in\bar{E}$, property 4) yields
$$\mathfrak{D}_{\mathfrak{Y}}\mathfrak{Y}'=pr_{\bar{E}}[\mathfrak{Y},\mathfrak{Y}'],\;
\mathfrak{D}_{\mathfrak{Y}'}\mathfrak{Y}=pr_{E}[\mathfrak{Y},\mathfrak{Y}'].$$
The covariant derivatives $\mathfrak{D}_{\mathfrak{X}}\mathfrak{X}'$, $\mathfrak{X},\mathfrak{X}'\in\mathcal{H}$ can be obtained from the first condition 1) and condition 2) like in the well known case of a Riemannian connection \cite{KN}, Proposition IV.2.3. Finally, in order to get the covariant derivatives
$\mathfrak{D}_{\mathfrak{Y}}\mathfrak{Y}'$ where $\mathfrak{Y},\mathfrak{Y}'$ belong both either to the $i$ or the $-i$-eigenbundle, we notice that the second condition 1) is equivalent to
\begin{equation}\label{EEE}
\mathfrak{Y}''(\mathfrak{g}^c(\mathfrak{Y},\mathfrak{Y}')
-\mathfrak{g}^c(\mathfrak{D}_{\mathfrak{Y}''}\mathfrak{Y},\mathfrak{Y}')
-\mathfrak{g}^c(\mathfrak{D}_{\mathfrak{Y}''}\overline{\mathfrak{Y}'}, \overline{\mathfrak{Y}})=0,
\end{equation}
where bar denotes complex conjugation. If all the arguments belong to either $E$ or $\bar{E}$, we already have the covariant derivatives $\mathfrak{D}_{\mathfrak{Y}''}\overline{\mathfrak{Y}'}$ and the equality (\ref{EEE}) determines $\mathfrak{D}_{\mathfrak{Y}''}\mathfrak{Y}$.
The obtained results also show that $\mathfrak{D}$ is the complexification of a real connection.\end{proof}

We will say that $\mathfrak{D}$ is the {\it Hermitian connection} of $\mathfrak{g}$ since, along the leaves of $\mathfrak{V}$, $\mathfrak{D}$ is the Hermitian connection of the leaves (\cite{KN}, Proposition IX.10.2 and \cite{V73} Theorem 4.6.8). If the metric $\mathfrak{g}$ is the extension of a locally Lagrange, tangent metric $\gamma$ of $\mathcal{T}M$, then, by Proposition \ref{propKahler}, $\mathfrak{g}$ restricts to K\"ahler metrics on the leaves of $\mathfrak{V}$ and the Hermitian connection of the leaves coincides with the Riemannian connection (\cite{KN}, Theorem IX.4.3). This implies that the Hermitian connection $\mathfrak{D}$ satisfies the properties (\ref{propconexcan}) with the foliation $\mathcal{V}$ replaced by $\mathfrak{V}$. Hence, $\mathfrak{D}$ is the canonical connection $\tilde{\nabla}^D$ of the pair $(\mathfrak{V},\mathfrak{g})$ (see Remark \ref{obsnablaDgen}), $D$ being the Levi-Civita connection of $\mathfrak{g}$. This observation leads to the following result.
\begin{prop}\label{curbpeTrond} Let $\mathfrak{g}$ be the extension of the locally Lagrange, tangent metric $\gamma$ of $\mathcal{T}M$ and $\nabla^D$ the canonical connection of $\gamma$, then, along $\mathcal{T}M$ seen as a submanifold of $\mathcal{T}(\mathcal{V})$, one has
\begin{equation}\label{curburiegale}
R_{\tilde{\nabla}^D}(\mathfrak{Z}_1,\mathfrak{Z}_2,
\mathfrak{Z}_3,\mathfrak{Z}_4)=
R_{\nabla^D}(\mathfrak{Z}_1,\mathfrak{Z}_2,\mathfrak{Z}_3,\mathfrak{Z}_4),
\;\forall\mathfrak{Z}_1,\mathfrak{Z}_2,\mathfrak{Z}_3,\mathfrak{Z}_4
\in T\mathcal{T}M.\end{equation}\end{prop}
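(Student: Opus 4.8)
The plan is to pass to the adapted coframe $(dx^i,\theta^i,\kappa^i)$ of \eqref{cobaseTV}, to use multilinearity, and to verify \eqref{curburiegale} componentwise, grouping the arguments $\mathfrak{Z}_k\in T\mathcal{T}M$ according to whether they are vertical (in $\mathcal{V}_1$, spanned by $\partial/\partial y^i$) or horizontal (spanned by the lifts $X_i=\partial/\partial x^i-t_i^j\,\partial/\partial y^j$ coming from $\mathcal{T}M$). Throughout I would exploit that the structure functions $g_{ij}$ and $t_i^j$ depend on $(x,y)$ only and are constant in the $\eta$-directions: since $\nabla^D$ and $\tilde\nabla^D$ are both produced from these data by the universal formulas \eqref{BottV}, \eqref{propconexcan} of a canonical connection, their coefficients along $\{\eta=0\}$ are the same expressions in $g_{ij},t_i^j$, the $x^i$ serving merely as parameters. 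It is convenient to take the horizontal arguments to be projectable, as in Remark \ref{obsargoriz}, so that the torsion terms of \eqref{curbDB} drop out.

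First I would settle the purely vertical block, with all four arguments in $\mathcal{V}_1$. This computation lives inside a single leaf of $\mathfrak{V}$, which by Proposition \ref{propKahler} carries the K\"ahler metric $g_{ij}\,dz^i\otimes d\bar z^j$, $z^i=y^i+\sqrt{-1}\,\eta^i$, on which $\tilde\nabla^D$ restricts to the Hermitian connection. A short check gives that its holomorphic Christoffel symbols $\tfrac12 g^{kl}(\partial g_{jl}/\partial y^i)=\tfrac12 g^{kl}C_{ijl}$ coincide with those of the Levi-Civita connection of the Hessian leaf metric $g_{ij}\,dy^i\otimes dy^j$, that is of $\nabla^D|_{\mathcal{V}}$. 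Because $\mathfrak{g}$ and $\gamma$ agree on $\mathcal{V}_1$ and $\mathcal{V}_1\perp_{\mathfrak{g}}\mathcal{V}_2$, contracting with $\partial/\partial y^a$ in the first slot annihilates any $\mathcal{V}_2$-part of the curvature operator, and the two four-tensors coincide. This is nothing but the isolated-leaf case, i.e. the Hessian--K\"ahler correspondence already contained in Proposition \ref{proprelRQ}.

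The delicate part, which I expect to be the main obstacle, is the block in which at least one argument is horizontal. The inclusion $\mathcal{T}M\hookrightarrow\mathcal{T}(\mathcal{V})$ is not adapted to the two horizontal splittings: inside $\mathcal{T}(\mathcal{V})$ the lift $X_i$ equals the $\kappa$-horizontal vector plus a normal term $t_i^j\,\partial/\partial\eta^j$, and accordingly $\mathfrak{g}$ differs from $\gamma$ on horizontal pairs exactly by the $\kappa\kappa$-contribution $g_{ab}t_i^a t_j^b$. Worse, $\tilde\nabla^D$ is not tangent to $\mathcal{T}M$: already $\tilde\nabla^D_{\partial/\partial y^i}X_j$ is a genuine $\mathcal{V}_2$-valued expression built from $\partial t_j^l/\partial y^i$ and the vertical Christoffel symbols, so the curvature operator of $\tilde\nabla^D$ carries normal components absent from that of $\nabla^D$. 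The plan is to compute these normal components explicitly at $\eta=0$ and to show that, once paired through the normal parts $t_i^j\,\partial/\partial\eta^j$ of the embedded horizontal arguments and weighted by $g_{ij}$, they reassemble --- together with the excess $\kappa\kappa$-term of $\mathfrak{g}$ --- into precisely the horizontal curvature $R_{\nabla^D}$ of $\mathcal{T}M$. The mechanism I anticipate is that, all structure functions being $\eta$-independent, the $\eta$-derivatives vanish and the surviving terms organize into the same combinations that \eqref{curbDB} and \eqref{propconexcan} produce for $R_{\nabla^D}$; making this cancellation explicit, uniformly in the horizontal/vertical type of the remaining arguments, is where the proof is concentrated.
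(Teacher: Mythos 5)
Your handling of the all-vertical block is sound, and it is in fact the only block the paper uses later (Proposition \ref{R1/2Q} takes all arguments in $\mathcal{V}_1$): on a leaf of $\mathfrak{V}$ the holomorphic Christoffel symbols of the Hermitian connection of $g_{ij}dz^i\otimes d\bar{z}^j$ are $\frac{1}{2}g^{kl}C_{ijl}$, which by total symmetry of $C$ are exactly the leafwise Levi--Civita symbols of the Hessian metric, both connections preserve $\mathcal{V}_1$ there, and the two four-tensors agree. The genuine gap is the mixed block. What you offer for it is a program, not a proof: you correctly locate the difficulties (the embedded lift $X_i$ differs from the $\mathcal{H}$-horizontal vector of $\mathcal{T}(\mathcal{V})$ by $t_i^j\,\partial/\partial\eta^j$, the metrics $\mathfrak{g}$ and $\gamma$ differ on horizontal pairs by $g_{ab}t_i^at_j^b$, and $\tilde{\nabla}^D$ applied to $X_j$ acquires $\mathcal{V}_2$-valued terms), and then you \emph{anticipate} that these discrepancies cancel. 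Nothing in the proposal establishes the cancellation, and that cancellation is precisely the content of the statement for non-vertical arguments; as written you have proved only the special case.

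The idea you are missing is the one that lets the paper avoid the computation altogether: the canonical connection is \emph{characterized} by the axioms (\ref{propconexcan}). The paper identifies $\tilde{\nabla}^D$ with the Hermitian connection $\mathfrak{D}$ of Proposition \ref{propconexherm}; since $\mathfrak{D}$ is real and commutes with $\mathfrak{J}$ it preserves the real part $\mathcal{V}_1$ of $\mathcal{V}^c$, and as a Bott connection it preserves $\mathcal{H}$, so it induces a connection in $T\mathcal{T}M$ along the zero section; that induced connection satisfies (\ref{propconexcan}) and therefore, by uniqueness, \emph{is} $\nabla^D$. Equality of the curvature tensors is then immediate, with no terms to reassemble. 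If you insist on the coordinate route you must actually carry out the mixed-block computation, and in doing so you will have to confront the point your own remark raises: your observation that $\tilde{\nabla}^D_{\partial/\partial y^i}X_j$ has a $\mathcal{V}_2$-component conflicts with the tangency of the induced connection asserted in the paper's argument, so you must first settle whether the horizontal part of $T\mathcal{T}M$ in (\ref{curburiegale}) is to be read as the span of the $X_i$ or as $\mathcal{H}$ itself. Until the mixed block is either computed in full or replaced by the uniqueness argument, the proof is incomplete.
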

\begin{proof} The submanifold $\mathcal{T}M$ has the local equations $\eta^i=0$.
As previously noticed, we have $\tilde{\nabla}^D=\mathfrak{D}$. Since $\mathfrak{D}$ is a real connection that commutes with $\mathfrak{J}$, the isomorphism of complex vector bundles $(\mathfrak{V},\mathfrak{J}_{\mathfrak{V}})\approx\mathcal{V}^c$ tells that $\tilde{\nabla}^D$ must preserve the real part $\mathcal{V}$ of the complexification $\mathcal{V}^c$. On the other hand, the connection induced by $\tilde{\nabla}^D$ in the subbundle $T\mathcal{T}M\subset T\mathcal{T}(\mathcal{V})|_{\mathcal{T}M}$ also satisfies (\ref{propconexcan}), therefore, it just is the canonical connection $\nabla^D$ on $\mathcal{T}M$. Accordingly, the covariant derivatives in the two curvature tensors of (\ref{curburiegale}) are the same and we are done.\end{proof}

Furthermore, we define the {\it extended Berwald connection} $\tilde{\nabla}^B$ on $\mathcal{T}(\mathcal{V})$ to be the Bott connection with respect to the foliation $\mathfrak{V}$ and the horizontal bundle $\mathcal{H}$ such that
\begin{equation}\label{Brerwaldlarg} \tilde{\nabla}^B_{\mathfrak{X}}\mathfrak{X}' =S'_1pr_{\mathcal{V}_1}[\mathfrak{X},S_1\mathfrak{X}'],
\;\tilde{\nabla}^B_{\mathfrak{Z}}\mathfrak{Z}'=0,\end{equation}
$\forall\mathfrak{X},\mathfrak{X}'\in\mathcal{H}$, $\forall\mathfrak{Z}\in\mathfrak{V}$ and all parallel fields $\mathfrak{Z}'\in\mathfrak{V}$ (i.e., $\mathfrak{Z}'=
\lambda_i(x)(\partial/\partial y^i)+ \zeta^i(x)(\partial/\partial\eta^i$), where
$S'_1$ is defined by $(S_1|_{\mathcal{H}})^{-1}$ on $\mathcal{V}_1$ and by zero on $\mathcal{H}$ and $\mathcal{V}_2$. The other covariant derivatives $\tilde{\nabla}^B$ are provided by the Bott condition (\ref{BottV}) with $\mathcal{X},\mathcal{Y}$ replaced by $\mathfrak{X},\mathfrak{Z}$, respectively. In particular, using a projectable field $$\mathfrak{X}=\xi^i(x)X_i=\xi^i(x)(\frac{\partial}{\partial x^i}-
t_i^j(x,y)\frac{\partial}{\partial y^j}),$$ we see that
$\tilde{\nabla}^B_{\mathfrak{X}}$ preserves $\mathcal{V}_1,\mathcal{V}_2$, separately. The second part of (\ref{Brerwaldlarg}) shows that the same is true for $\tilde{\nabla}^B_{\mathfrak{Z}}$. Furthermore, using the definition of $\mathfrak{J}$ and the local formulas (\ref{JVlocal}), we see that $\tilde{\nabla}^B\mathfrak{J}=0$. By comparing the definitions, we also see that $\tilde{\nabla}^B$ induces the Berwald connection $\nabla^B$ on the submanifold $\mathcal{T}M\subset\mathcal{T}(\mathcal{V})$.

Finally, we can prove the following proposition which is the announced relation between curvatures.
\begin{prop}\label{R1/2Q} Let $\mathfrak{g}$ be the extension of the locally Lagrange, tangent metric $\gamma$ of $\mathcal{T}M$. Then, at any point of the submanifold $\mathcal{T}M$ and for any arguments $\mathcal{Y}_a\in\mathcal{V}_1$ $(a=1,2,3,4)$, the following relation holds
\begin{equation}\label{eqR1/2Q}
R_{\tilde{\nabla}^D}(\mathcal{Y}_1, \mathcal{Y}_2,\mathcal{Y}_3,\mathcal{Y}_4)=
\frac{1}{2}Q(\mathcal{Y}_1, \mathcal{Y}_2,\mathcal{Y}_3,\mathcal{Y}_4).
\end{equation}\end{prop}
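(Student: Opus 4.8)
The plan is to reduce the statement to a single leaf of $\mathfrak{V}$ and there recognize it as the K\"ahler version of Shima's Hessian--K\"ahler curvature correspondence. Fix a point of $\mathcal{T}M$ and let $L$ be the leaf of $\mathfrak{V}$ through it. By Proposition \ref{propKahler}, $L$ carries the (pseudo) K\"ahler metric $\mathfrak{g}|_L$ with holomorphic coordinates $z^i=y^i+\sqrt{-1}\,\eta^i$, and it is exactly the tangent manifold of the Hessian leaf $N$ of $\mathcal{V}$ with Shima's construction. Along $L$ the connection $\tilde\nabla^D$ is the Levi-Civita (hence Hermitian) connection, while the extended Berwald connection $\tilde\nabla^B$ is flat and torsionless along $\mathfrak{V}$, having $(y,\eta)$ as affine coordinates, so it plays the role of $\nabla^0$ for $L$. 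I would therefore work throughout with the extended difference tensor $\tilde\Phi=\tilde\nabla^D-\tilde\nabla^B$ and the extended Hessian curvature $\tilde Q$ built from it by (\ref{eqHessc}).

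First I would record the structural facts about $\tilde\Phi$. Since $\tilde\nabla^B$ induces $\nabla^B$ on $\mathcal{T}M$ and, by Proposition \ref{curbpeTrond}, $\tilde\nabla^D$ induces $\nabla^D$ there, the restriction of $\tilde\Phi$ to $\mathcal{V}_1$ at points of $\mathcal{T}M$ coincides with the Lagrange--Hessian difference tensor $\Phi$; consequently $\tilde Q$ restricts to $Q$ on $\mathcal{V}_1$. Moreover both $\tilde\nabla^D$ and $\tilde\nabla^B$ preserve $\mathfrak{J}$ (the relation $\tilde\nabla^B\mathfrak{J}=0$ was checked after (\ref{Brerwaldlarg}), and $\tilde\nabla^D$ is Hermitian), so $\tilde\Phi$ commutes with $\mathfrak{J}$; equivalently it extends $\mathds{C}$-bilinearly in the holomorphic frame $\partial/\partial z^i$.

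Next I would run the computation of Proposition \ref{proprelRQ} for the pair $(\tilde\nabla^D,\tilde\nabla^B)$ along $\mathfrak{V}$: with parallel arguments, the second formula (\ref{curbDB}) together with (\ref{exprQgotic}) expresses $R_{\tilde\nabla^D}$ through $\tilde{\mathfrak{Q}}$ and hence through $\tilde Q$, exactly as in the derivation of (\ref{relRQ}). The one new ingredient is the K\"ahler structure. Because the K\"ahler potential is the pullback of $\varphi$ and depends only on $\mathrm{Re}\,z$, its purely holomorphic second derivatives vanish; hence, when the curvature operator is taken on the holomorphic/antiholomorphic pair $\mathcal{Y}-\sqrt{-1}\,\mathfrak{J}\mathcal{Y}$ produced from $\mathcal{V}_1$, one of the two antisymmetrized terms of the Shima relation (\ref{relRQ}) drops out and the surviving term is precisely $\tfrac12\tilde Q$ rather than the skew combination. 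Restricting to $\mathcal{V}_1$ at $\eta=0$, where $\tilde Q=Q$, then yields (\ref{eqR1/2Q}).

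I expect the main obstacle to be exactly this last step: keeping the real/holomorphic bookkeeping straight through $z^i=y^i+\sqrt{-1}\,\eta^i$, verifying that the vanishing of the holomorphic Hessian of $\varphi$ annihilates one of the two $\tilde Q$-terms, and checking that the normalization of $\mathfrak{g}|_L$ reproduces the factor $\tfrac12$ and the stated order of arguments. Everything preceding it is a transcription of the Hessian machinery of Section 1 to the foliation $\mathfrak{V}$, so the only genuinely delicate point is how the complex structure collapses the antisymmetrized relation into the single, symmetric term $\tfrac12 Q$.
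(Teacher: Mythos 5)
Your setup coincides with the paper's: interpret $\mathcal{T}(\mathcal{V})$ as the total space of $T^cM$, regard $\tilde{\nabla}^D,\tilde{\nabla}^B,\tilde{\Phi},\tilde{Q}$ as the $\mathds{C}$-linear extensions of $\nabla^D,\nabla^B,\Phi,Q$, and transport the proof of (\ref{relRQ}) to parallel arguments in $\mathfrak{V}$. Up to that point everything you say is correct and is exactly the paper's route. The problem is the last step, which you yourself identify as ``the only genuinely delicate point'' and then do not carry out: you only assert that one of the two antisymmetrized $\tilde{Q}$-terms drops and the other becomes $\tfrac12 Q$. Worse, the reason you offer for this --- that the K\"ahler potential depends only on $\mathrm{Re}\,z$ and hence ``its purely holomorphic second derivatives vanish'' --- is false. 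If $\Psi$ is a function of $y^i=\mathrm{Re}\,z^i$ alone, then $\partial^2\Psi/\partial z^i\partial z^j=\tfrac14\,\partial^2\Psi/\partial y^i\partial y^j=\partial^2\Psi/\partial z^i\partial\bar z^j$, i.e.\ the purely holomorphic Hessian \emph{equals} the mixed one, which is the metric itself; were it zero, the metric would be degenerate. So the mechanism you propose for collapsing the skew combination cannot work as stated.

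The argument the paper actually uses at this point is different and you should adopt it: apply the complexified relation (\ref{relRQ}) to the rotated arguments $\mathfrak{J}\mathcal{Y}_1,\mathfrak{J}\mathcal{Y}_2$ and invoke the K\"ahler curvature identity
\begin{equation*}
R_{\tilde{\nabla}^D}(\mathfrak{J}\mathcal{Y}_1,\mathfrak{J}\mathcal{Y}_2,
\mathcal{Y}_3,\mathcal{Y}_4)=R_{\tilde{\nabla}^D}(\mathcal{Y}_1,\mathcal{Y}_2,
\mathcal{Y}_3,\mathcal{Y}_4).
\end{equation*}
Because $\tilde{Q}$ is the complexification of $Q$ (equivalently, $\tilde\Phi$ commutes with $\mathfrak{J}$, as you noted), the $\mathfrak{J}$'s can be pulled through $\tilde{Q}$: in one of the two resulting terms they occur in a pair and cancel by the $\mathfrak{J}$-invariance of $\mathfrak{g}$, yielding exactly $Q(\mathcal{Y}_1,\mathcal{Y}_2,\mathcal{Y}_3,\mathcal{Y}_4)$ on $\mathcal{V}_1$; in the other term a single $\mathfrak{J}$ survives, so that term is the $\mathfrak{g}$-product of a vector in $\mathcal{V}_2$ with one in $\mathcal{V}_1$ and vanishes by $\mathcal{V}_1\perp_{\mathfrak{g}}\mathcal{V}_2$. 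That orthogonality, not any property of the potential, is what kills one term and leaves $\tfrac12 Q$. Without this (or an equivalent correct computation in the holomorphic frame tracking that $\partial g_{i\bar j}/\partial z^k=\partial g_{i\bar j}/\partial\bar z^k=\tfrac12\,\partial g_{ij}/\partial y^k$), your proof is incomplete at precisely the point where the factor $\tfrac12$ and the symmetric form of the right-hand side are produced.
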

\begin{proof}
In the interpretation of $\mathcal{T}(\mathcal{V})$ as the total space of the complexified tangent bundle $T^cM$, $\tilde{\nabla}^D,\tilde{\nabla}^B,\tilde{\Phi}= \tilde{\nabla}^D-\tilde{\nabla}^B$ and $\tilde{Q}$ defined by taking complex arguments $\mathcal{Y}$ in formula (\ref{eqHessc}) are the extension of
$\nabla^D,\nabla^B,\Phi,Q$ to complex arguments by $\mathds{C}$-linearity.
Accordingly, if transposed to complex arguments the proof of formula (\ref{relRQ}) holds, which means that we have
\begin{equation}\label{auxcurb1}
R_{\nabla^D}(\mathfrak{Y}',\mathfrak{Y},\mathfrak{Y}_1,\mathfrak{Y}_2)
=\frac{1}{2}[Q(\mathfrak{Y}',\mathfrak{Y}_1,\mathfrak{Y}_2,\mathfrak{Y})
-Q(\mathfrak{Y}',\mathfrak{Y}_2,\mathfrak{Y}_1,\mathfrak)],\end{equation}
where the arguments are parallel vector fields in $\mathfrak{V}$.
A known property of the curvature tensor of a K\"ahler metric tells us that we have
$$ R_{\tilde{\nabla}^D}(\mathfrak{J}\mathcal{Y}_1, \mathfrak{J}\mathcal{Y}_2,\mathcal{Y}_3,\mathcal{Y}_4)=	 R_{\tilde{\nabla}^D}(\mathcal{Y}_1, \mathcal{Y}_2,\mathcal{Y}_3,\mathcal{Y}_4).$$
Now, for parallel arguments $\mathcal{Y}\in\mathcal{V}_1$, (\ref{auxcurb1}) becomes
$$R_{\tilde{\nabla}^D}(\mathfrak{J}\mathcal{Y}_1, \mathfrak{J}\mathcal{Y}_2,\mathcal{Y}_3,\mathcal{Y}_4)=
\frac{1}{2}[\mathfrak{g}(\mathfrak{J}\mathcal{Y}_1, \tilde{Q}(\mathcal{Y}_3,\mathcal{Y}_4)(\mathfrak{J}\mathcal{Y}_2))
-\mathfrak{g}(\mathfrak{J}\mathcal{Y}_1,\tilde{Q}(\mathfrak{J}\mathcal{Y}_3, \mathcal{Y}_4) \mathcal{Y}_2)],$$
where the last term vanishes because it is a scalar product of orthogonal vectors. For the first term, the interpretation of $\tilde{Q}$ as the complexification	of $Q$ yields $$\mathfrak{g}(\mathfrak{J}\mathcal{Y}_1, \tilde{Q}(\mathcal{Y}_3,\mathcal{Y}_4)(\mathfrak{J}\mathcal{Y}_2))=
\mathfrak{g}(\mathcal{Y}_1, \tilde{Q}(\mathcal{Y}_3,\mathcal{Y}_4)(\mathcal{Y}_2))= Q(\mathcal{Y}_1,\mathcal{Y}_2,\mathcal{Y}_3,\mathcal{Y}_4).$$ Combining the results we get the required conclusion.

If we take the arguments of (\ref{eqR1/2Q}) in the basis $\partial/\partial y^i$ and decompose them into the sum of the holomorphic and anti-holomorphic part, then, using the properties of the curvature tensor of a K\"ahler metric, we will get Proposition 3.3 of \cite{S}.
\end{proof}
\section{Appendix}
In this appendix we give an index-free presentation of some more facts concerning Hessian curvature on locally affine manifolds that were treated via local coordinates in \cite{S}. The notation is the same as in Section 1.2.

The importance of the symmetry properties of the Riemannian curvature tensor suggests looking for symmetry properties of the Hessian curvature. These follow from the following result that is equivalent to formula (1) of Proposition 3.1 of \cite{S}.
\begin{prop}\label{3.1.S}  The value of the Hessian curvature tensor on parallel arguments is given by the formula
\begin{equation}\label{QcuC}
Q(\mathcal{Y}_1,\mathcal{Y}_2,\mathcal{Y}_3,\mathcal{Y}_4)=
\mathcal{Y}_2(C(\mathcal{Y}_1,\mathcal{Y}_3,\mathcal{Y}_4)) -2\gamma(\nabla_{\mathcal{Y}_2}\mathcal{Y}_1, \nabla_{\mathcal{Y}_3}\mathcal{Y}_4).\end{equation}\end{prop}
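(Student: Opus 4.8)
The plan is to evaluate the definition (\ref{eqHessc}) of the Hessian curvature directly on $\nabla^0$-parallel arguments and then trade the flat derivative $\nabla^0$ for the Levi-Civita derivative $\nabla$ together with the Cartan tensor. Since $\mathcal{Y}_3,\mathcal{Y}_4$ are parallel, the two correction terms $\Phi(\nabla^0_{\mathcal{Y}_2}\mathcal{Y}_3,\mathcal{Y}_4)$ and $\Phi(\mathcal{Y}_3,\nabla^0_{\mathcal{Y}_2}\mathcal{Y}_4)$ in the curvature operator vanish, so the covariant form collapses to
$$Q(\mathcal{Y}_1,\mathcal{Y}_2,\mathcal{Y}_3,\mathcal{Y}_4)=\gamma(\nabla^0_{\mathcal{Y}_2}\Phi(\mathcal{Y}_3,\mathcal{Y}_4),\mathcal{Y}_1).$$
This single scalar is the object I must expand.

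The computation then has two moves. First, I apply the Leibniz rule for $\nabla^0$ to the function $\gamma(\Phi(\mathcal{Y}_3,\mathcal{Y}_4),\mathcal{Y}_1)$; invoking the definition (\ref{tCartan}) of $C$ as $\nabla^0\gamma$ and the parallelism $\nabla^0_{\mathcal{Y}_2}\mathcal{Y}_1=0$, this gives
$$\gamma(\nabla^0_{\mathcal{Y}_2}\Phi(\mathcal{Y}_3,\mathcal{Y}_4),\mathcal{Y}_1)=\mathcal{Y}_2(\gamma(\Phi(\mathcal{Y}_3,\mathcal{Y}_4),\mathcal{Y}_1))-C(\mathcal{Y}_2,\Phi(\mathcal{Y}_3,\mathcal{Y}_4),\mathcal{Y}_1).$$
Lemma \ref{PhicuC} rewrites $\gamma(\Phi(\mathcal{Y}_3,\mathcal{Y}_4),\mathcal{Y}_1)$ in terms of $C(\mathcal{Y}_1,\mathcal{Y}_3,\mathcal{Y}_4)$, turning the first summand into a multiple of $\mathcal{Y}_2(C(\mathcal{Y}_1,\mathcal{Y}_3,\mathcal{Y}_4))$, the derivative-of-$C$ term appearing in (\ref{QcuC}). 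Second, for the residual contraction I use total symmetry of $C$ (valid in the Hessian case) to reorder it as $C(\Phi(\mathcal{Y}_3,\mathcal{Y}_4),\mathcal{Y}_1,\mathcal{Y}_2)$ and then apply Lemma \ref{PhicuC} a second time, converting it into $2\gamma(\Phi(\mathcal{Y}_3,\mathcal{Y}_4),\Phi(\mathcal{Y}_1,\mathcal{Y}_2))$. Replacing each $\Phi$ by a Levi-Civita derivative of its parallel second slot via (\ref{propPhi}), namely $\Phi(\mathcal{Y}_3,\mathcal{Y}_4)=\nabla_{\mathcal{Y}_3}\mathcal{Y}_4$ and $\Phi(\mathcal{Y}_1,\mathcal{Y}_2)=\nabla_{\mathcal{Y}_1}\mathcal{Y}_2=\nabla_{\mathcal{Y}_2}\mathcal{Y}_1$ (the last equality because parallel fields commute and $\nabla$ is torsionless), produces exactly the term $-2\gamma(\nabla_{\mathcal{Y}_2}\mathcal{Y}_1,\nabla_{\mathcal{Y}_3}\mathcal{Y}_4)$.

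The \emph{main obstacle} I anticipate is the bookkeeping of the numerical constants, because Lemma \ref{PhicuC} carries a factor $\tfrac12$ and is used twice while the symmetry of $C$ is invoked to recombine arguments. Concretely, the Leibniz step delivers the first summand naturally as $\tfrac12\,\mathcal{Y}_2(C(\mathcal{Y}_1,\mathcal{Y}_3,\mathcal{Y}_4))$, so pinning down the precise coefficient of the derivative-of-$C$ term against the normalization $C_{uvw}=\partial g_{vw}/\partial y^u$ of (\ref{tCartan1}) is the point that needs care. I would settle it with a short cross-check in affine coordinates: writing $\Phi^e_{bc}=\tfrac12 g^{ed}C_{dbc}$, differentiating $(\partial_a\Phi^e_{bc})g_{ef}$, and using $\partial_a g^{ed}=-g^{ep}g^{dq}C_{apq}$ recovers both terms independently and fixes every constant. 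Apart from this factor-tracking, each step is purely formal, using only $\nabla\gamma=0$, $T_\nabla=0$, the symmetry of $C$, and Lemma \ref{PhicuC}.
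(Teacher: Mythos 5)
Your route is essentially the paper's own: the paper likewise reduces to $\gamma(\nabla^0_{\mathcal{Y}_2}\nabla_{\mathcal{Y}_3}\mathcal{Y}_4,\mathcal{Y}_1)$ (using $\Phi(\mathcal{Y}_3,\mathcal{Y}_4)=\nabla_{\mathcal{Y}_3}\mathcal{Y}_4$ at the outset), then writes $\nabla^0=\nabla-\Phi$ and uses $\nabla\gamma=0$, where you instead keep $\nabla^0$ and use $\nabla^0\gamma=C$; both versions invoke Lemma \ref{PhicuC} twice together with the total symmetry of $C$, and the two bookkeepings are term-for-term equivalent. The one substantive issue is precisely the coefficient you flag as your ``main obstacle,'' and you should not expect your cross-check to rescue the printed constant: your Leibniz step correctly produces $\tfrac12\,\mathcal{Y}_2(C(\mathcal{Y}_1,\mathcal{Y}_3,\mathcal{Y}_4))$, and the affine-coordinate computation you sketch, with $\Phi^e_{bc}=\tfrac12 g^{ed}C_{dbc}$, gives $g_{fe}\,\partial_a\Phi^e_{bc}=\tfrac12\partial_a C_{fbc}-\tfrac12 g^{pq}C_{afp}C_{qbc}$, i.e.\ it confirms the factor $\tfrac12$, not $1$. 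The paper's own chain of equalities, carried to its end, yields the same $\tfrac12$, as does formula (1) of Proposition 3.1 of \cite{S} (which carries the coefficient $\tfrac12$ on the fourth derivative of the potential), to which the proposition is said to be equivalent. A one-dimensional sanity check, $g=\varphi''\,dy\otimes dy$, gives $Q_{1111}=\tfrac12\varphi''''-(\varphi''')^2/(2\varphi'')$ and settles the matter. So there is no gap in your argument; rather, the display (\ref{QcuC}) as printed is off by a factor $\tfrac12$ in its first term, and you should state your conclusion with the coefficient $\tfrac12$ rather than massage the constants to match the printed formula. Note that none of the downstream uses are affected: the symmetries of Corollary \ref{symQQ} and the antisymmetrization in (\ref{relRQ}) are insensitive to the coefficient of the totally symmetric term $\mathcal{Y}_2(C(\mathcal{Y}_1,\mathcal{Y}_3,\mathcal{Y}_4))$.
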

\begin{proof} From (\ref{eqHessc}) and the definition of $\Phi$ we get
$$\begin{array}{l}
Q(\mathcal{Y}_1,\mathcal{Y}_2,\mathcal{Y}_3,\mathcal{Y}_4)=
\gamma(\nabla^0_{\mathcal{Y}_2}\nabla_{\mathcal{Y}_3}\mathcal{Y}_4, \mathcal{Y}_1)=
\gamma(\nabla_{\mathcal{Y}_2}\nabla_{\mathcal{Y}_3}\mathcal{Y}_4,  \mathcal{Y}_1)\vspace*{2mm}\\ -\gamma(\Phi(\mathcal{Y}_2,\nabla_{\mathcal{Y}_3}\mathcal{Y}_4), \mathcal{Y}_1)=
\gamma(\nabla_{\mathcal{Y}_2}\nabla_{\mathcal{Y}_3}\mathcal{Y}_4,  \mathcal{Y}_1)
-\frac{1}{2}C(\mathcal{Y}_1,\mathcal{Y}_2,\nabla_{\mathcal{Y}_3} \mathcal{Y}_4).
\end{array}$$
Then, using the total symmetry of $C$ and $\nabla\gamma=0$, we get the required result.\end{proof}
\begin{corol}\label{symQQ} The tensor field $Q$ has the following symmetry properties\footnote{The difference between these properties and those of Proposition 3.1 of \cite{S} is explained by our different choice of the order of arguments in the Hessian curvature.}
\begin{equation}\label{symQ1} \begin{array}{l}
Q(\mathcal{Y}_1,\mathcal{Y}_2,\mathcal{Y}_3,\mathcal{Y}_4)=
Q(\mathcal{Y}_1,\mathcal{Y}_2,\mathcal{Y}_4,\mathcal{Y}_3)\vspace*{2mm}\\
=Q(\mathcal{Y}_3,\mathcal{Y}_4,\mathcal{Y}_1,\mathcal{Y}_2)
=Q(\mathcal{Y}_2,\mathcal{Y}_1,\mathcal{Y}_3,\mathcal{Y}_4).\vspace*{2mm}\\
\end{array}\end{equation}
The same symmetries also hold for $\mathfrak{Q}(\mathcal{Y}_1,\mathcal{Y}_2,\mathcal{Y}_3,\mathcal{Y}_4)$.
\end{corol}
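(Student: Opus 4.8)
The plan is to read the three symmetries directly off the explicit formula (\ref{QcuC}) of Proposition \ref{3.1.S}, by splitting $Q$ evaluated on parallel arguments into its two constituent terms and analysing each separately. Thus I write
$$Q(\mathcal{Y}_1,\mathcal{Y}_2,\mathcal{Y}_3,\mathcal{Y}_4)= \mathcal{Y}_2(C(\mathcal{Y}_1,\mathcal{Y}_3,\mathcal{Y}_4)) - 2\gamma(\nabla_{\mathcal{Y}_2}\mathcal{Y}_1,\nabla_{\mathcal{Y}_3}\mathcal{Y}_4),$$
and show that the first term is in fact totally symmetric in all four arguments, while the second term is invariant under precisely the three operations appearing in (\ref{symQ1}).

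The second term is the easy one. Since $\mathcal{Y}_1$ and $\mathcal{Y}_4$ are parallel, the first equality of (\ref{propPhi}) turns it into $-2\gamma(\Phi(\mathcal{Y}_1,\mathcal{Y}_2),\Phi(\mathcal{Y}_3,\mathcal{Y}_4))$ after using the symmetry of $\Phi$. The symmetry of $\Phi$ in its two arguments together with the symmetry of $\gamma$ then makes this expression invariant under $\mathcal{Y}_1\leftrightarrow\mathcal{Y}_2$, under $\mathcal{Y}_3\leftrightarrow\mathcal{Y}_4$, and under the pair swap $(\mathcal{Y}_1,\mathcal{Y}_2)\leftrightarrow(\mathcal{Y}_3,\mathcal{Y}_4)$ — exactly the three symmetries claimed.

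The first term is the crux. Total symmetry of $C$ already gives invariance under every permutation of $\{\mathcal{Y}_1,\mathcal{Y}_3,\mathcal{Y}_4\}$, so the only additional fact needed is that the differentiating argument $\mathcal{Y}_2$ may be interchanged with $\mathcal{Y}_1$. For this I use that on parallel fields (\ref{tCartan1}) gives $C(\mathcal{Y}_1,\mathcal{Y}_3,\mathcal{Y}_4)=\mathcal{Y}_1(\gamma(\mathcal{Y}_3,\mathcal{Y}_4))$, so the first term is the iterated directional derivative $\mathcal{Y}_2(\mathcal{Y}_1(\gamma(\mathcal{Y}_3,\mathcal{Y}_4)))$; since two $\nabla^0$-parallel vector fields commute, $\mathcal{Y}_2\mathcal{Y}_1=\mathcal{Y}_1\mathcal{Y}_2$ acting on functions, which yields the swap $\mathcal{Y}_1\leftrightarrow\mathcal{Y}_2$. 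The transposition $(\mathcal{Y}_1\,\mathcal{Y}_2)$ together with the symmetric group on $\{\mathcal{Y}_1,\mathcal{Y}_3,\mathcal{Y}_4\}$ generate the full symmetric group on the four arguments, so the first term is totally symmetric and a fortiori invariant under the three operations of (\ref{symQ1}). Adding the two terms proves (\ref{symQ1}).

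For the statement about $\mathfrak{Q}$, I invoke (\ref{exprQgotic}), which on parallel arguments expresses $\mathfrak{Q}(\mathcal{Y}_1,\mathcal{Y}_2,\mathcal{Y}_3,\mathcal{Y}_4)$ as $Q(\mathcal{Y}_1,\mathcal{Y}_2,\mathcal{Y}_3,\mathcal{Y}_4)$ minus the correction $\gamma(\nabla_{\mathcal{Y}_2}\mathcal{Y}_3,\nabla_{\mathcal{Y}_1}\mathcal{Y}_4)+\gamma(\nabla_{\mathcal{Y}_2}\mathcal{Y}_4,\nabla_{\mathcal{Y}_1}\mathcal{Y}_3)$. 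Writing $\Phi_{ab}$ for $\Phi(\mathcal{Y}_a,\mathcal{Y}_b)$, this correction is $\gamma(\Phi_{23},\Phi_{14})+\gamma(\Phi_{24},\Phi_{13})$, and using only the symmetry of $\Phi$ and of $\gamma$ one checks it is invariant under the same three operations; hence $\mathfrak{Q}$, a difference of two tensors each satisfying (\ref{symQ1}), satisfies them as well. The main obstacle is solely the total symmetry of the first term: once the commutativity-of-parallel-fields argument is in place, everything else reduces to bookkeeping on how $\Phi$ and $\gamma$ respond to the relevant transpositions.
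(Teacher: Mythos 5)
Your proof is correct and follows exactly the route the paper intends: its entire proof is ``Use the expressions (\ref{QcuC}) and (\ref{exprQgotic})'', and you have simply filled in the details — reading the three symmetries of $Q$ off the two terms of (\ref{QcuC}) (with the nice extra observation that the first term is in fact totally symmetric, being the fourth derivative of the potential) and then transferring them to $\mathfrak{Q}$ via the correction term in (\ref{exprQgotic}).
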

\begin{proof} Use the expressions (\ref{QcuC}) and (\ref{exprQgotic}).
\end{proof}
\begin{rem}\label{Duistermaat} {\rm An interesting consequence of formulas (\ref{QcuC}), (\ref{relRQ}) is that the Riemannian curvature of a Hessian metric involves only the first order derivatives of the metric, hence, only the third order derivatives of the function $\varphi$ whose Hessian defines the metric, while the usual expression of the Riemannian curvature contains the second order derivatives of the metric, hence, we should expect the fourth order derivatives of $\varphi$. This phenomenon was studied in \cite{Du}.}\end{rem}

A comparison with Riemannian geometry, again, suggests a notion of {\it Hessian sectional curvature} \cite{S}. As a matter of fact, the latter is an invariant associated to a quadratic cone of the tangent space of a locally affine manifold $N$ endowed with a metric $\gamma$.
In order to define it we need the following observation.
Any 4-times covariant tensor field $\Xi$ that has the symmetry properties (\ref{symQ1}) is equivalent to a quadratic form $\tilde{\Xi}$ on $\odot^2TN$, which is defined by
\begin{equation}\label{tildaXi}
\tilde{\Xi}(\mathcal{Y}\odot\mathcal{Y}',\mathcal{Y}\odot\mathcal{Y}')= \Xi(\mathcal{Y},\mathcal{Y}',\mathcal{Y},\mathcal{Y}')\end{equation}
on the generators $\mathcal{Y}\odot\mathcal{Y}'$.
Equivalently, with respect to the local basis $(\partial/\partial y^u)$, if $\tau$ is a symmetric, $2$-contravariant tensor, $\tilde{\Xi}(\tau,\tau)=\Xi_{uvst}\tau^{uv}\tau^{st}$.
\begin{defin}\label{defconicalcurv} {\rm Let $\nu$ be a quadratic cone defined by $\nu(\mathcal{Y},\mathcal{Y})=0$, where $\mathcal{Y}\in TN$ and $\nu$ is a $2$-covariant, symmetric, tensor with $||\nu||_\gamma\neq0$. Then,
the {\it conical} (sectional \cite{S}) {\it Hessian curvature} of $\nu$ is
$$\kappa(\nu)= \frac{\tilde{Q}(\sharp_\gamma\nu,\sharp_\gamma\nu)}{||\nu||^2_\gamma},$$
where $\tilde{Q}$ is defined by (\ref{tildaXi}).}\end{defin}

The value of $\kappa(\nu)$ does not change under the multiplication of $\nu$ by a scalar.
If $\mathcal{Y}\odot\mathcal{Y}'$ is a generator of $\odot^2TN$, the conical curvature of $\flat_\gamma(\mathcal{Y}\odot\mathcal{Y}')$ may be written (omitting $\flat_\gamma$) as
$$
\kappa(\mathcal{Y}\odot\mathcal{Y}') =\frac{Q(\mathcal{Y},\mathcal{Y}',\mathcal{Y},
\mathcal{Y}')}{G(\mathcal{Y},\mathcal{Y}',\mathcal{Y},\mathcal{Y}')},
$$
where
$$G(\mathcal{Y}_1,\mathcal{Y}_2,\mathcal{Y}_3,\mathcal{Y}_4)=
\gamma(\mathcal{Y}_1,\mathcal{Y}_3)\gamma(\mathcal{Y}_2,\mathcal{Y}_4)
+\gamma(\mathcal{Y}_1,\mathcal{Y}_4)\gamma(\mathcal{Y}_2,\mathcal{Y}_3)$$ has the same symmetry properties like $Q$.
\begin{prop}\label{curbconct} {\rm\cite{S}}	 The conical curvature of a (pseudo) Hessian metric $\gamma$ is independent of the cone, i.e., $\kappa(\nu)=f(y)\in C^\infty(\mathcal{T}M)$, iff
\begin{equation}\label{QfG}
Q(\mathcal{Y}_1,\mathcal{Y}_2,\mathcal{Y}_3,\mathcal{Y}_4)= fG(\mathcal{Y}_1,\mathcal{Y}_2,\mathcal{Y}_3,\mathcal{Y}_4).
\end{equation}
Moreover, in this case, and if $m\geq3$, $f=const.$\end{prop}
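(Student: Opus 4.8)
The plan is to handle the equivalence and the rigidity statement separately. For the equivalence, I would first note that both $Q$ and $G$ carry the symmetries (\ref{symQ1}), so that, via the correspondence (\ref{tildaXi}), each is determined by a quadratic form on $\odot^2TN$; by polarization in characteristic $0$, the assignment $\Xi\mapsto\tilde\Xi$ is in fact a linear bijection from $4$-tensors with the symmetries (\ref{symQ1}) onto symmetric bilinear forms on $\odot^2TN$. Since $G$ furnishes the denominator of $\kappa$ (on a generator $\kappa(\mathcal{Y}\odot\mathcal{Y}')=Q(\mathcal{Y},\mathcal{Y}',\mathcal{Y},\mathcal{Y}')/G(\mathcal{Y},\mathcal{Y}',\mathcal{Y},\mathcal{Y}')$), the statement $\kappa(\nu)=f$ for every admissible cone is exactly the identity $\tilde{Q}(\sharp_\gamma\nu,\sharp_\gamma\nu)=f\,\tilde{G}(\sharp_\gamma\nu,\sharp_\gamma\nu)$ for all symmetric $\nu$ with $\|\nu\|_\gamma\neq0$. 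Because the excluded locus $\{\|\nu\|_\gamma=0\}$ is the zero set of a nontrivial polynomial, this identity holds on a dense subset of $\odot^2TN$, hence on all of it; polarizing and applying the bijection yields $Q=fG$. The converse is immediate from the defining ratio, so the equivalence is settled.

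For the \emph{moreover} part I would substitute $Q=fG$ into the curvature relation (\ref{relRQ}) and evaluate the difference $G(\mathcal{Y}',\mathcal{Y}_1,\mathcal{Y}_2,\mathcal{Y})-G(\mathcal{Y}',\mathcal{Y}_2,\mathcal{Y}_1,\mathcal{Y})$. The two symmetric $\gamma(\mathcal{Y}',\mathcal{Y})\gamma(\mathcal{Y}_1,\mathcal{Y}_2)$ contributions cancel, leaving
$$R_\nabla(\mathcal{Y}',\mathcal{Y},\mathcal{Y}_1,\mathcal{Y}_2)=\frac{f}{2}\big[\gamma(\mathcal{Y}',\mathcal{Y}_2)\gamma(\mathcal{Y},\mathcal{Y}_1)-\gamma(\mathcal{Y}',\mathcal{Y}_1)\gamma(\mathcal{Y},\mathcal{Y}_2)\big].$$
This is precisely the Riemannian curvature tensor of a (pseudo) Riemannian manifold of pointwise constant sectional curvature $f/2$.

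It then remains to invoke the Schur phenomenon: for $m\geq 3$, a (pseudo) Riemannian manifold whose sectional curvature is constant at each point has globally constant curvature, the proof proceeding through the contracted second Bianchi identity for the Levi-Civita connection $\nabla$, which remains valid in the pseudo-Riemannian setting thanks to the nondegeneracy of $\gamma$. Hence $f/2$, and therefore $f$, is constant. I expect the genuine content of the argument to lie in the reduction just described: recognizing that the purely algebraic pointwise identity $Q=fG$ collapses, through (\ref{relRQ}), to the constant-sectional-curvature form of $R_\nabla$; once this is in hand, the conclusion is classical. The only secondary point requiring care is the pseudo-Riemannian version of Schur's lemma, where nondegeneracy of $\gamma$ is exactly what legitimizes the metric contraction of the Bianchi identity.
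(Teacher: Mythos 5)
Your argument is correct and follows essentially the same route as the paper: the equivalence is obtained by passing from the quadratic form $\widetilde{(Q-fG)}$ to the associated symmetric bilinear form (polarization), and the rigidity statement by substituting $Q=fG$ into (\ref{relRQ}) to recognize a metric of pointwise constant sectional curvature and then invoking Schur's theorem for $m\geq3$. The only discrepancy is the sign of that sectional curvature (the paper gets $-f/2$ in the Kobayashi--Nomizu convention $R_\nabla(\mathcal{Y}_1,\mathcal{Y}_2,\mathcal{Y}_3,\mathcal{Y}_4)=\gamma(\mathcal{Y}_1,R_\nabla(\mathcal{Y}_3,\mathcal{Y}_4)\mathcal{Y}_2)$, you state $f/2$), which is immaterial to the conclusion.
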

\begin{proof} If $\kappa=f$, the quadratic form $\widetilde{(Q-fG)}$ vanishes and so does the corresponding symmetric bilinear form. This fact exactly is (\ref{QfG}).
Furthermore, if (\ref{QfG}) holds, (\ref{relRQ}) implies
that the Riemannian, sectional curvature is $-(f/2)$ and Schur's theorem (Theorem V.2.2 in \cite{KN}) tells that, if $m\geq3$, $f=const.$
\end{proof}

Hessian metrics of constant conical curvature were studied in \cite{S} and, more recently in \cite{FK}. \vspace*{2mm}\\
{\bf Open problem.}
If the metric $\gamma$ is positive definite, so is the corresponding form $\tilde{G}$ given by (\ref{tildaXi}) and we can consider {\it principal cones} $C_i$ and {\it principal conical curvatures} $\lambda_i$, defined by the eigenvectors, respectively, the eigenvalues of $\tilde{Q}$ with respect to $\tilde{G}$. It would be interesting to study ``ombilical" Hessian manifolds defined by the equality of all the principal conical curvatures $\lambda_i$.

{\small Department of Mathematics, University of Haifa, Israel. E-mail: vaisman@math.haifa.ac.il}
\end{document}